\DeclareSymbolFont{bbold}{U}{bbold}{m}{n}
\DeclareSymbolFontAlphabet{\mathbbold}{bbold}
\newcommand{\N}{\mathbb{N}}
\newcommand{\R}{\mathbb{R}}
\renewcommand{\epsilon}{\varepsilon}
\DeclareMathAccent{\Circ}{\mathalpha}{operators}{"17}
\newcommand{\interior}[1]{\operatorname{\Circ{#1}}}
\newcommand{\inter}[1]{{\Circ{#1}}}
\newcommand{\intersec}[1]{\operatorname{\overbracket[0.5pt]{#1}^{\circ}}}
\newcommand{\ob}[1]{\operatorname{\overbracket[0.5pt]{#1}^{}}}
\DeclareMathOperator{\spt}{spt}
\DeclareMathOperator{\dev}{dev}
\DeclareMathOperator{\sym}{sym}
\DeclareMathOperator{\dive}{div}
\DeclareMathOperator{\curl}{curl}
\DeclareMathOperator{\grad}{grad}
\DeclareMathOperator{\Grad}{Grad}
\DeclareMathOperator{\Dive}{Div}
\DeclareMathOperator{\Curl}{Curl}
\renewcommand{\skew}{\operatorname{skew}}
\DeclareMathOperator{\kar}{ker}
\DeclareMathOperator{\rge}{ran}
\DeclareMathOperator{\dom}{dom}
\DeclareMathOperator{\tr}{tr}
\renewcommand{\Re}{\operatorname{Re}}
\renewcommand{\hat}{\widehat}
\let\phi\varphi
\let\leq\leqslant
\def\@row#1,{#1\@ifnextchar;{\@gobble}{&\@row}}
\def\@matrix{%
    \expandafter\@row\my@arg,;%
    \@ifnextchar({\\ \get@in@paren{\@matrix}}{\after@matrix}%
    }
\def\matrixtype#1#2#3{%
    \ifmmode\def\after@matrix{\end{#2}\right#3}%
    \else\def\after@matrix{\end{#2}\right#3$}$\fi
    \left#1\begin{#2}\get@in@paren{\@matrix}%
    }
\def\@column#1,{#1\@ifnextchar;{\@gobble}{\\ \@column}}
\newcommand\vect{}
\def\svect(#1){\left(\begin{smallmatrix}\@column#1,;\end{smallmatrix}\right)}
\def\vect{\get@in@paren{\@vect}}
\def\@vect{\left(\begin{matrix}\expandafter\@column\my@arg,;\end{matrix}\right)}
\def\get@in@paren#1({\def\my@arg{}\def\my@rest{}\def\after@get{#1}\get@arg}
\let\e@a\expandafter
\def\get@arg#1){\e@a\kl@test\my@rest#1(;}
\def\kl@test#1(#2;{\e@a\def\e@a\my@arg\e@a{\my@arg#1}%
                   \ifx:#2:\let\my@exec\after@get
                   \else\let\my@exec\get@arg
                        \e@a\def\e@a\my@arg\e@a{\my@arg(}%
                        \def@rest#2;%
                   \fi\my@exec}
\def\def@rest#1(;{\def\my@rest{#1\kl@zu}}
\def\kl@zu{)}
\newcommand\MyPairedDelimiter{%
  \@ifstar{\My@Paired@Delimiter{{}}}
          {\My@Paired@Delimiter{}}%
}
\newcommand\My@Paired@Delimiter[4]{%
  \newcommand#2{%
    \@ifstar{\start@PD{#1}{\delimitershortfall=-1sp}{#3}{#4}}
            {\start@PD{#1}{}{#3}{#4}}%
  }%
}
\newcommand\start@PD[5]{%
  #1\mathopen{\mathpalette\put@delim@helper{\put@delim{#2}{#3}{.}{#5}}}%
  #5%
  \mathclose{\mathpalette\put@delim@helper{\put@delim{#2}{.}{#4}{#5}}}%
}
\newcommand\put@delim@helper[2]{%
  \hbox{$\m@th\nulldelimiterspace=0pt #2#1$}%
}
\newcommand\put@delim[5]{%
  \setbox\z@\hbox{$\m@th#5{#4}$}%
  \setbox\tw@\null
  \ht\tw@\ht\z@ \dp\tw@\dp\z@
  #1#5%
  \left#2\box\tw@\right#3%
}
\MyPairedDelimiter*{\abs}{\lvert}{\rvert}
\MyPairedDelimiter*{\norm}{\lVert}{\rVert}
\MyPairedDelimiter{\set}{\{}{\}}
\theoremstyle{plain} 
\newtheorem{theorem}{Theorem}[section]
\newtheorem{corollary}[theorem]{Corollary}
\newtheorem{lemma}[theorem]{Lemma}
\newtheorem{proposition}[theorem]{Proposition}
\theoremstyle{definition}
\newtheorem*{definition}{Definition}
\newtheorem{remark}[theorem]{Remark}
\begin{document}

\title{A Functional Analytic Perspective to the div-curl Lemma}

\author{Marcus Waurick}

\date{}

\maketitle

\begin{abstract}
We present an abstract functional analytic formulation of the celebrated $\dive$-$\curl$ lemma found by F.~Murat and L.~Tartar. The viewpoint in this note relies on sequences for operators in Hilbert spaces. Hence, we draw the functional analytic relation of the div-curl lemma to differential forms and other sequences such as the $\Grad\grad$-sequence discovered recently by D.~Pauly and W.~Zulehner in connection with the biharmonic operator.
\end{abstract}

Keywords: $\dive$-$\curl$-lemma, compensated compactness, de Rham complex

MSC 2010:  35A15 (35A23 46E35)

\medmuskip=4mu plus 2mu minus 3mu
\thickmuskip=5mu plus 3mu minus 1mu
\belowdisplayshortskip=9pt plus 3pt minus 5pt

\section{Introduction}

In the year 1978 a groundbreaking result in the theory of homogenisation has been found by Francois Murat and Luc Tartar, the celebrated $\dive$-$\curl$ lemma (\cite{M78} or \cite{T09}):

\begin{theorem}\label{t:dcl} Let $\Omega\subseteq \mathbb{R}^d$ open, $(u_n)_n,(v_n)_n$ in $L^2(\Omega)^d$ weakly convergent. Assume that
\[
   (\dive u_n)_n= (\sum_{j=1}^d \partial_j u_n)_n,\quad (\curl u_n)_n=\left((\partial_j u_n^{(k)}-\partial_k u_n^{(j)})_{j,k}\right)_n
\]
are relatively compact in $H^{-1}(\Omega)$ and $H^{-1}(\Omega)^{d\times d}$, respectively.

Then $(\langle u_n,v_n\rangle_{\mathbb{C}^d})_n$ converges in $\mathcal{D}'(\Omega)$ and we have
\[
   \lim_{n\to\infty} \langle u_n,v_n\rangle_{\mathbb{C}^d} = \langle \lim_{n\to\infty}u_n,\lim_{n\to\infty}v_n\rangle_{\mathbb{C}^d}.
\] 
\end{theorem}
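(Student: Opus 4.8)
The plan is to show that the two compactness hypotheses on $(u_n)_n$ are by themselves strong enough to force \emph{strong} convergence $u_n \to u := \lim_n u_n$ in $L^2_{\loc}(\Omega)^d$; once this is in hand, the assertion is the elementary fact that the pairing of a strongly convergent sequence with a weakly convergent one converges. Indeed, writing $v := \lim_n v_n$ for the weak limit and testing against $\psi \in C_c^\infty(\Omega)$, one has $\int_\Omega \psi \langle u_n, v_n\rangle_{\C^d} = \int_\Omega \langle \psi u_n, v_n\rangle_{\C^d}$, where $\psi u_n \to \psi u$ strongly in $L^2(\Omega)^d$ while $v_n \rightharpoonup v$ weakly, so the right-hand side converges to $\int_\Omega \psi\langle u, v\rangle_{\C^d}$. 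Thus the whole matter reduces to proving strong local convergence of $(u_n)_n$.

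To localise, I would fix $\phi \in C_c^\infty(\Omega)$ and set $w_n := \phi(u_n - u)$, extended by zero to a compactly supported element of $L^2(\R^d)^d$; then $w_n \rightharpoonup 0$ in $L^2(\R^d)^d$ with supports in the fixed compact set $K := \spt \phi$. By the Leibniz rule, $\dive w_n = \phi(\dive u_n - \dive u) + \nabla\phi \cdot (u_n - u)$ and, componentwise, $(\curl w_n)_{jk} = \phi(\curl u_n - \curl u)_{jk} + (\partial_j\phi)(u_n-u)^{(k)} - (\partial_k\phi)(u_n-u)^{(j)}$. The two kinds of terms are treated separately. Since $\dive$ and $\curl$ are bounded linear maps from $L^2$ into the respective $H^{-1}$ spaces, they are weakly continuous, so $\dive u_n \rightharpoonup \dive u$ and $\curl u_n \rightharpoonup \curl u$ in $H^{-1}$; combined with the assumed relative compactness, these weak limits upgrade to norm limits, and multiplication by $\phi$ (a bounded operation into compactly supported elements of $H^{-1}(\R^d)$) gives $\phi(\dive u_n - \dive u)\to 0$ and $\phi(\curl u_n - \curl u)\to 0$ there. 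For the remaining terms, each factor $(u_n-u)^{(\cdot)}$ tends weakly to $0$ in $L^2(K)$, and since the embedding of $L^2$-functions supported in $K$ into $H^{-1}(\R^d)$ is compact (Rellich, by duality), these terms converge \emph{strongly} to $0$ in $H^{-1}(\R^d)$. The same compact embedding gives $w_n \to 0$ in $H^{-1}(\R^d)$. Altogether $\dive w_n$, $\curl w_n$ and $w_n$ all tend to $0$ strongly in $H^{-1}(\R^d)$.

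The analytic heart, and the step I expect to be the main obstacle, is a coercivity estimate expressing that control of divergence and rotation controls the full gradient. Via Plancherel one has, for every compactly supported $w \in L^2(\R^d)^d$, the pointwise Fourier identity
\[
   \abs{\xi}^2\,\abs{\hat w(\xi)}^2 = \abs{\widehat{\dive w}(\xi)}^2 + \sum_{j<k}\bigl\lvert\widehat{(\curl w)_{jk}}(\xi)\bigr\rvert^2,
\]
which is just the Lagrange identity applied to the real and imaginary parts of $\hat w(\xi)$. Splitting the frequency domain at $\abs{\xi}=1$ and using $\abs{\xi}^{-2}\le 2(1+\abs{\xi}^2)^{-1}$ for $\abs\xi\ge 1$ together with $(1+\abs{\xi}^2)^{-1}\ge \tfrac12$ for $\abs\xi\le 1$ yields
\[
   \norm{w}_{L^2}^2 \le 2\bigl(\norm{\dive w}_{H^{-1}}^2 + \norm{\curl w}_{H^{-1}}^2 + \norm{w}_{H^{-1}}^2\bigr).
\]
Applied to $w = w_n$, the three terms on the right all tend to $0$ by the previous paragraph, whence $\norm{w_n}_{L^2}\to 0$. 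As $\phi$ was arbitrary, this is precisely strong convergence $u_n \to u$ in $L^2_{\loc}(\Omega)^d$.

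Combining this with the strong--weak pairing described at the outset completes the argument. I expect the only genuine difficulty to be the coercivity estimate: it is exactly the point where the algebraic structure of the $\dive$-$\curl$ pair — its role as two consecutive arrows of the de Rham complex — is used, and it is this structure that the functional analytic reformulation of the present note is designed to isolate and generalise.
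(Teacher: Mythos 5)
Your proposal proves the statement \emph{as literally printed}, but the printed hypotheses contain a typo which your argument inherits: both differential constraints are imposed on $(u_n)_n$, whereas the intended Murat--Tartar hypotheses --- the ones the paper actually works with --- put the divergence condition on $(u_n)_n$ and the curl condition on $(v_n)_n$. This is visible in the abstract Theorem \ref{t:mr}, where $(\hat{A_0^*}u_n)_n$ \emph{and} $(\hat{A_1}v_n)_n$ are assumed precompact, in the paper's proof of Theorem \ref{t:dcl2}, which verifies precompactness of $(\hat{\interior{\Curl}}\,v_n)_n$, and in Remark \ref{r:dcl12}, where the localisation is carried out for $\dive(\psi u_n)$ and $\Curl(\psi v_n)$. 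Under your literal reading the theorem is much weaker than the div-curl lemma: if both $(\dive u_n)_n$ and $(\curl u_n)_n$ are precompact in $H^{-1}$, then, as you show, $u_n\to u$ strongly in $L^2_{\loc}(\Omega)^d$, and pairing with a merely weakly convergent $(v_n)_n$ --- on which no differential condition is needed at all --- is elementary. Your execution of that weaker statement is essentially sound: the Leibniz-rule bookkeeping, the compactness of the embedding of $L^2$ functions supported in a fixed compact set into $H^{-1}(\R^d)$, and the Plancherel identity $\abs{\xi}^2\abs{\hat{w}(\xi)}^2=\abs{\hat{\dive w}(\xi)}^2+\sum_{j<k}\abs{\hat{(\curl w)}_{jk}(\xi)}^2$ are all correct; your resulting inequality is precisely a whole-space $H^{-1}$-version of the Friedrichs inequality that the paper proves on convex domains as Theorem \ref{t:gaff}.

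The genuine gap is that this route cannot prove the intended theorem. With the constraints split between the two sequences, your central claim --- strong convergence $u_n\to u$ in $L^2_{\loc}$ --- is false in general: take $u_n(x)\coloneqq w(nx)$ with $w$ smooth, $\Z^d$-periodic, divergence-free and non-constant; then $\dive u_n=0$ for every $n$, and $u_n$ converges weakly to the mean of $w$ but on no open subset strongly. The content of compensated compactness is exactly that $\langle u_n,v_n\rangle_{\C^d}$ converges although \emph{neither} factor converges strongly. The paper circumvents this through an abstract Helmholtz decomposition: strong convergence is recovered only for the projections $\pi_{\rge(A_0)}u_n$ (from precompactness of $\dive u_n$, via Corollary \ref{cor:tb3}) and $\pi_{\rge(A_1^*)}v_n$ (from precompactness of $\Curl v_n$), together with the finite-dimensional harmonic part $\pi_{\kar(A_0^*)\cap\kar(A_1)}u_n$; the remaining cross terms vanish identically by orthogonality of ranges and kernels in the sequence $(A_0,A_1)$. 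To rescue your Fourier argument you would have to apply it \emph{after} such a splitting --- to the gradient part of $u_n$ and the co-gradient part of $v_n$ separately --- at which point you have reconstructed the paper's strategy in Fourier language rather than found an alternative to it.
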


Ever since people were trying to generalise the latter theorem in several directions. For this we refer to \cite{BCM09}, \cite{L17}, \cite{GM08}, and \cite{LR12} just to name a few. It has been observed that the latter theorem has some relationship to the de Rham cohomology, see \cite{T09}. We shall also refer to \cite{X14}, where the Helmholtz decomposition has been used for the proof of the div-curl lemma for the case of 3 space dimensions. We will meet the abstract counter part of the Helmholtz projection in our abstract approach to the div-curl lemma. In any case, the sequence property of the differential operators involved plays a crucial role in the derivation of the div-curl lemma. Note that, however, there are results that try to weaken this aspect, as well, see \cite{G07}. In this note, in operator theoretic terms, we shall further emphasise the intimate relation of the sequence property of operators from vector analysis and the div-curl lemma. In particular, we will provide a purely functional analytic proof of the $\dive$-$\curl$ lemma. More precisely, we relate the so-called ``global'' form (\cite{S16}) of the div-curl lemma to functional analytic realisations of certain operators from vector analysis, that is, to compact sequences of operators in Hilbert spaces. Moreover, having provided this perspective, we will also obtain new variants of the div-curl lemma, where we apply our abstract findings to the Pauly--Zulehner $\Grad\grad$-sequence, see \cite{PZ17} and \cite{QB15}. With these new results, we have paved the way to obtain homogenisation results for the biharmonic operator with variable coefficients, which, however, will be postponed to future research. 

The next section contains the functional analytic prerequisites and our main result itself -- the operator-theoretic version of the div-curl lemma. The subsequent section is devoted to the proof of the div-curl lemma with the help of the results obtained in Section \ref{s:adcl}.
In the concluding section, we will apply the general result to several examples.

\section{An Abstract $\dive$-$\curl$ Lemma}\label{s:adcl}

We start out with the definition of a (short) sequence of operators acting in Hilbert spaces. Note that in other sources sequences are also called ``complexes''. We use the usual notation of domain, range, and kernel of a linear operator $A$, that is, $\dom(A)$, $\rge(A)$, and $\kar(A)$. Occasionally, we will write $\dom(A)$ to denote the domain of $A$ endowed with the graph norm.

\begin{definition}
  Let $H_j$ be Hilbert spaces, $j\in\{0,1,2\}$. Let $A_0 \colon \dom(A_0)\subseteq H_0\to H_1$, and $A_1 \colon \dom(A_1)\subseteq H_1 \to H_2$ densely defined and closed. The pair $(A_0,A_1)$ is called a \emph{(short) sequence}, if $\rge(A_0)\subseteq \kar(A_1)$. We say that the sequence $(A_0,A_1)$ is \emph{closed}, if both $\rge(A_0)\subseteq H_1$ and $\rge(A_1)\subseteq H_2$ are closed. The sequence $(A_0,A_1)$ is called \emph{compact}, if $\dom(A_1)\cap \dom(A_0^*) \hookrightarrow H_1$ is compact.
\end{definition}

We recall some well-known results for sequences of operators in Hilbert spaces, we refer to \cite{PZ17} and the references therein for the respective proofs.

\begin{theorem}\label{t:tb} Let $(A_0,A_1)$ be a sequence. Then the following statements hold:
\begin{enumerate}[label=(\alph*)]
 \item\label{t1} $(A_1^*,A_0^*)$ is a sequence;
 \item\label{t2} $(A_0,A_1)$ is closed if and only if $(A_1^*,A_0^*)$ is closed.
 \item\label{t3} $(A_0,A_1)$ is compact if and only if $(A_1^*,A_0^*)$ is compact;
 \item\label{t4} if $(A_0,A_1)$ is compact, then $(A_0,A_1)$ is closed. 
 \item\label{t5} $(A_0,A_1)$ is compact if and only if both $\dom(A_0)\cap \kar(A_0)^\bot\hookrightarrow \kar(A_0)^\bot$ and $\dom(A_1^*)\cap \kar(A_1^*)^\bot\hookrightarrow \kar(A_1^*)^\bot$ are compact and $\kar(A_0^*)\cap \kar(A_1)$ is finite-dimensional. 
\end{enumerate}
\end{theorem}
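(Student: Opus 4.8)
The plan is to derive everything from four standard facts about a closed, densely defined operator $T$ between Hilbert spaces: that $T^{**}=T$ and that $T^*$ is again closed and densely defined; that $\kar(T)=\rge(T^*)^\bot$ and $\overline{\rge(T)}=\kar(T^*)^\bot$; the closed range theorem, that $\rge(T)$ is closed if and only if $\rge(T^*)$ is; and Schauder's theorem, that a bounded operator is compact if and only if its adjoint is. Part \ref{t1} is then immediate: from $\rge(A_0)\subseteq\kar(A_1)$ I pass to orthogonal complements to get $\kar(A_1)^\bot\subseteq\rge(A_0)^\bot=\kar(A_0^*)$, and since $\rge(A_1^*)\subseteq\overline{\rge(A_1^*)}=\kar(A_1)^\bot$ this yields $\rge(A_1^*)\subseteq\kar(A_0^*)$, which is exactly the sequence property of $(A_1^*,A_0^*)$. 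Part \ref{t2} follows by applying the closed range theorem to $A_0$ and to $A_1$ separately. Part \ref{t3} is a verification that the two defining conditions coincide verbatim: because $(A_1^*)^*=A_1$, the intersection domain and middle space defining compactness of $(A_1^*,A_0^*)$ are $\dom(A_0^*)\cap\dom((A_1^*)^*)=\dom(A_0^*)\cap\dom(A_1)$ and $H_1$, the same data as for $(A_0,A_1)$.

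For part \ref{t4} I would establish two Poincaré-type estimates by contradiction, exploiting that the sequence property places the relevant vectors into the intersection domain $D:=\dom(A_1)\cap\dom(A_0^*)$ automatically. Indeed $\kar(A_1)^\bot\subseteq\kar(A_0^*)$ and dually $\kar(A_0^*)^\bot=\overline{\rge(A_0)}\subseteq\kar(A_1)$. Hence any $y\in\dom(A_1)\cap\kar(A_1)^\bot$ already lies in $\kar(A_0^*)$ with $A_0^*y=0$, so its $D$-graph norm is controlled by $\|y\|$ and $\|A_1 y\|$. A hypothetical sequence with $\|y_n\|=1$ and $A_1 y_n\to 0$ would then be $D$-bounded, and the compact embedding $D\hookrightarrow H_1$ would furnish a subsequence converging in $H_1$ to some $y$ with $\|y\|=1$; closedness of $A_1$ forces $y\in\kar(A_1)$ while closedness of $\kar(A_1)^\bot$ forces $y\in\kar(A_1)^\bot$, a contradiction. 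This proves the estimate, hence $\rge(A_1)$ is closed; the symmetric argument with $A_0^*$ in place of $A_1$ (now using $\kar(A_0^*)^\bot\subseteq\kar(A_1)$) shows $\rge(A_0^*)$, and thus $\rge(A_0)$, is closed.

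Part \ref{t5} is the real work, and once ranges are closed I would use the orthogonal decomposition $H_1=\overline{\rge(A_0)}\oplus\mathcal{H}\oplus\overline{\rge(A_1^*)}$ with $\mathcal{H}:=\kar(A_1)\cap\kar(A_0^*)$, together with the reduced-operator machinery: for closed $A$ with closed range the restriction $\mathcal{A}$ of $A$ to $\dom(A)\cap\kar(A)^\bot$, regarded as a map onto $\overline{\rge(A)}$, is boundedly invertible, its adjoint is the corresponding reduction of $A^*$, and the reduced embedding $\dom(A)\cap\kar(A)^\bot\hookrightarrow\kar(A)^\bot$ is compact iff $\mathcal{A}^{-1}$ is. By Schauder's theorem $\mathcal{A}^{-1}$ is compact iff $(\mathcal{A}^{-1})^*=(\mathcal{A}^*)^{-1}$ is, so the reduced embedding of $A_0$ (in $H_0$) is compact iff that of $A_0^*$ (onto $\overline{\rge(A_0)}\subseteq H_1$) is, and the reduced embedding of $A_1^*$ (in $H_2$) is compact iff that of $A_1$ (in $H_1$) is. For the forward implication, $\mathcal{H}\subseteq D$ carries the $H_1$-norm as its graph norm, so the compact embedding restricts to the identity on the closed subspace $\mathcal{H}$, forcing $\dim\mathcal{H}<\infty$; moreover $\dom(A_0^*)\cap\kar(A_0^*)^\bot\subseteq D$ with coinciding graph norms (since $A_1$ vanishes there), so $D$-boundedness gives $H_1$-precompactness and the reduced $A_0^*$-embedding is compact, whence so is the reduced $A_0$-embedding; symmetrically for $A_1^*$. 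For the converse, the two hypothesised embeddings first yield closed ranges by the argument of \ref{t4}, hence the decomposition, and via the equivalence above they yield compactness of the reduced $A_0^*$- and $A_1$-embeddings in $H_1$; splitting a $D$-bounded sequence as $u_n=a_n+h_n+b_n$ along the three summands, one checks using $h_n,b_n\in\kar(A_0^*)$ and $a_n,h_n\in\kar(A_1)$ that $a_n$ is bounded in the reduced $A_0^*$-domain, $b_n$ in the reduced $A_1$-domain, and $h_n$ in the finite-dimensional space $\mathcal{H}$, so each component has a convergent subsequence and $u_n$ does too.

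The step I expect to be the main obstacle is precisely the passage in \ref{t5} between the reduced embeddings living in $H_0$ and $H_2$ and the corresponding reduced embeddings in the middle space $H_1$: identifying $\mathcal{A}^*$ with the reduction of $A^*$, invoking Schauder correctly, and verifying the component-wise membership and norm-boundedness in the three-term decomposition. Parts \ref{t1}--\ref{t4} are routine once the four structural facts above are in hand.
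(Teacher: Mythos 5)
The paper does not prove Theorem \ref{t:tb} itself --- it defers to \cite{PZ17} --- so there is no in-paper argument to compare against line by line; judged on its own merits, your proposal is correct and follows essentially the standard route taken in that reference, using exactly the machinery this paper builds anyway. Your reduced operator $\mathcal{A}$ with $(\mathcal{A}^{-1})^*=(\mathcal{A}^*)^{-1}$ is precisely the operator $B=\iota_{\rge(A)}^*A\iota_{\rge(A^*)}$ of Theorem \ref{t:tb2}\,\ref{tb2a},\ref{tb2b}, the Schauder transfer between the reduced embeddings in $H_0$ and $H_2$ and those in $H_1$ is the standard mechanism behind part \ref{t5}, and your splitting $H_1=\rge(A_0)\oplus\bigl(\kar(A_0^*)\cap\kar(A_1)\bigr)\oplus\rge(A_1^*)$ is the same decomposition the paper itself deploys in the proof of Theorems \ref{t:mr} and \ref{t:mrcc}. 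The step you flagged as the main obstacle is handled correctly: the sequence property does place $\dom(A_1)\cap\kar(A_1)^\bot$ and $\dom(A_0^*)\cap\kar(A_0^*)^\bot$ inside $\dom(A_1)\cap\dom(A_0^*)$ with coinciding graph norms (the other operator vanishes there), so the contradiction argument for the Poincar\'e inequalities and hence closedness of ranges in \ref{t4} goes through, and in the converse of \ref{t5} you rightly establish closed ranges first, before invoking bounded invertibility of the reduced operators; the componentwise facts $a_n\in\dom(A_0^*)\cap\kar(A_0^*)^\bot$ with $A_0^*a_n=A_0^*u_n$ and $b_n\in\dom(A_1)\cap\kar(A_1)^\bot$ with $A_1b_n=A_1u_n$ check out as stated.
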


Next, we need to introduce some notation.

\begin{definition}
 Let $H_0,H_1$ be Hilbert spaces, $A\colon \dom(A)\subseteq H_0\to H_1$. Then we define the canonical embeddings
 \begin{enumerate}
  \item $\iota_{\rge(A)} \colon \rge(A) \hookrightarrow H_1$;
  \item $\iota_{\kar(A)} \colon \kar(A) \hookrightarrow H_0$;
  \item $\pi_{\rge(A)}\coloneqq \iota_{\rge(A)}\iota_{\rge(A)}^*$;
  \item $\pi_{\kar(A)}\coloneqq \iota_{\kar(A)}\iota_{\kar(A)}^*$.
 \end{enumerate}
\end{definition}

If a densely defined closed linear operator has closed range, it is possible to continuously invert this operator in an appropriate sense. For convenience of the reader and since the operator to be defined in the next theorem plays an important role in the following, we provide the results with the respective proofs. Note that the results are known, as well, see for instance again \cite{PZ17}. 

\begin{theorem}\label{t:tb2} Let $H_0,H_1$ Hilbert spaces, $A\colon \dom(A)\subseteq H_0\to H_1$ densely defined and closed. Assume that $\rge(A)\subseteq H_1$ is closed. Then the following statements hold:
\begin{enumerate}[label=(\alph*)]
 \item\label{tb2a} $B\coloneqq \iota_{\rge(A)}^*A \iota_{\rge(A^*)}$ is continuously invertible;
 \item\label{tb2b} $B^*=\iota_{\rge(A^*)}^*A^* \iota_{\rge(A)}$;
 \item\label{tb2c} the operator $\hat{A^*}\colon H_1 \to \dom(B)^*, \phi\mapsto (v\mapsto \langle \phi,Av\rangle_{H_1})$ is continuous; and $\hat{B^*}\coloneqq \hat{A^*}|_{\rge(A)}$ is an isomorphism that extends $B^*$.
\end{enumerate}
\end{theorem}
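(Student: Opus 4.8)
The plan is to first make the operator $B$ explicit. Since $A$ is densely defined and closed with $\rge(A)$ closed, the closed range theorem gives that $\rge(A^*)$ is closed as well, and we obtain the orthogonal decompositions $H_0=\kar(A)\oplus\rge(A^*)$ and $H_1=\kar(A^*)\oplus\rge(A)$ from $\kar(A)^\bot=\overline{\rge(A^*)}=\rge(A^*)$ and $\kar(A^*)^\bot=\overline{\rge(A)}=\rge(A)$. Unravelling the embeddings then shows $\dom(B)=\dom(A)\cap\rge(A^*)$ with $Bx=Ax$ for $x\in\dom(B)$; that is, $B$ is the restriction of $A$ to $\dom(A)\cap\kar(A)^\bot$, regarded as an operator into $\rge(A)$. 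Analogously, setting $C\coloneqq\iota_{\rge(A^*)}^*A^*\iota_{\rge(A)}$, one has $\dom(C)=\dom(A^*)\cap\rge(A)$ and $Cy=A^*y$.

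For \ref{tb2a}, injectivity is immediate, since $Bx=0$ forces $x\in\kar(A)\cap\rge(A^*)=\{0\}$. For surjectivity, given $y\in\rge(A)$ I would pick $x_0\in\dom(A)$ with $Ax_0=y$ and set $x\coloneqq x_0-\pi_{\kar(A)}x_0$; as $\kar(A)\subseteq\dom(A)$ we get $x\in\dom(A)\cap\rge(A^*)=\dom(B)$ and $Bx=Ax=Ax_0=y$. Finally $B$ is closed, being the restriction of the closed operator $A$ to the closed subspace $\rge(A^*)$, so the bounded inverse theorem for closed operators yields that $B^{-1}\colon\rge(A)\to\rge(A^*)$ is continuous; equivalently, closedness of $\rge(A)$ furnishes a constant $c>0$ with $\norm{Bx}\ge c\norm{x}$ on $\dom(B)$.

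For \ref{tb2b}, I would first apply \ref{tb2a} to the pair $(A^*,A)$ (legitimate since $A^*$ is densely defined and closed with closed range and $A^{**}=A$) to obtain that $C$ is continuously invertible. The inclusion $C\subseteq B^*$ then follows by direct computation: for $x\in\dom(B)$ and $y\in\dom(C)=\dom(A^*)\cap\rge(A)$ one has $\langle Bx,y\rangle=\langle Ax,y\rangle=\langle x,A^*y\rangle=\langle x,Cy\rangle$, the last step using $A^*y\in\rge(A^*)$ and $x\in\rge(A^*)$; hence $y\in\dom(B^*)$ with $B^*y=Cy$. Since both $C$ and $B^*$ are bijections onto $\rge(A^*)$ — the latter because the Hilbert-space adjoint of a continuously invertible operator is continuously invertible, with $(B^*)^{-1}=(B^{-1})^*$ — the inclusion $C\subseteq B^*$ upgrades to the equality $C=B^*$.

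For \ref{tb2c}, equip $\dom(B)$ with the graph norm; by \ref{tb2a} this norm is equivalent to $x\mapsto\norm{Bx}$, so $B\colon\dom(B)\to\rge(A)$ is a topological isomorphism. Continuity of $\hat{A^*}$ is then Cauchy--Schwarz: for $v\in\dom(B)$ we have $\abs{\langle\phi,Av\rangle}=\abs{\langle\phi,Bv\rangle}\le\norm{\phi}\,\norm{Bv}\le\norm{\phi}\,\norm{v}_{\dom(B)}$, whence $\norm{\hat{A^*}\phi}_{\dom(B)^*}\le\norm{\phi}$. Restricting to $\phi\in\rge(A)$ and writing $\langle\phi,Bv\rangle=\langle\phi,Bv\rangle_{\rge(A)}$, one factors $\hat{B^*}$ as the Riesz isomorphism $\rge(A)\to\rge(A)^*$ followed by the dual of the isomorphism $B$, so that $\hat{B^*}$ is an isomorphism onto $\dom(B)^*$. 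That $\hat{B^*}$ extends $B^*$ is read off from the Gelfand triple $\dom(B)\hookrightarrow\rge(A^*)\hookrightarrow\dom(B)^*$: for $\phi\in\dom(B^*)$ the image $B^*\phi\in\rge(A^*)$ acts on $v\in\dom(B)$ precisely as $v\mapsto\langle\phi,Bv\rangle=\hat{B^*}\phi(v)$, up to the chosen antiduality convention. I expect the bookkeeping in \ref{tb2c} — pinning down the three duality identifications and verifying the extension statement consistently — to be the main obstacle, whereas \ref{tb2a} and \ref{tb2b} are routine once $B$ and $C$ have been identified concretely.
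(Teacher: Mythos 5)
Your proposal is correct, and while it matches the paper's argument for part \ref{tb2a} almost verbatim --- injectivity from $\kar(A)^\bot=\rge(A^*)$, surjectivity by subtracting the kernel projection from a preimage, closedness of $B$ as a restriction of the closed $A$, then the closed graph theorem --- you take genuinely different routes in \ref{tb2b} and \ref{tb2c}. For \ref{tb2b} the paper does not argue at all: it simply cites \cite[Lemma 2.4]{TW14} for the identity $B^*=A^*$ on $\dom(A^*)\cap\kar(A^*)^\bot$. You instead apply \ref{tb2a} to the pair $(A^*,A)$ to get that $C\coloneqq\iota_{\rge(A^*)}^*A^*\iota_{\rge(A)}$ is continuously invertible, verify $C\subseteq B^*$ by a direct adjoint computation, and upgrade to equality by the standard maximality argument (a surjective restriction of an injective operator with the same target must be the whole operator) --- a self-contained replacement for the citation. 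For \ref{tb2c} the paper proves that $\hat{B^*}$ is onto by hand: given $\psi\in\dom(B)^*$ it takes the Riesz representative $w$ of $\psi$ with respect to the graph inner product on $\dom(B)$ and exhibits the explicit preimage $\phi=(B^{-1})^*w+Bw\in\rge(A)$, deducing injectivity of $\hat{B^*}$ from injectivity of $B^*$; your factorisation $\hat{B^*}=B'\circ R_{\rge(A)}$ (Riesz isomorphism followed by the dual of the topological isomorphism $B\colon\dom(B)\to\rge(A)$, the graph norm being equivalent to $v\mapsto\|Bv\|$ by \ref{tb2a}) is shorter, makes the isomorphism property structurally transparent, and gives $(\hat{B^*})^{-1}$ in closed form, whereas the paper's construction buys an explicit solution formula without dualising $B$. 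One routine fact that both you and the paper leave implicit, and which you tacitly use when forming $B^*$ and invoking the Gelfand triple $\dom(B)\hookrightarrow\rge(A^*)\hookrightarrow\dom(B)^*$, is that $\dom(B)=\dom(A)\cap\rge(A^*)$ is dense in $\rge(A^*)$; this is immediate since for $x\in\dom(A)$ one has $\pi_{\rge(A^*)}x=x-\pi_{\kar(A)}x\in\dom(B)$ (because $\kar(A)\subseteq\dom(A)$) and $\dom(A)$ is dense in $H_0$, but it deserves a line in a self-contained write-up.
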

\begin{proof}
  We prove \ref{tb2a}. Note that by the closed range theorem, we have $\rge(A^*)\subseteq H_0$ is closed. Moreover, since $\kar(A)^\bot=\rge(A^*)$, we have that $B$ is injective and since $\iota_{\rge(A)}^*$ projects onto $\rge(A)$, we obtain that $B$ is also onto. Next, as $A$ is closed, we infer that $B$ is closed. Thus, $B$ is continuously invertible by the closed graph theorem.
  
  For the proof of \ref{tb2b}, we observe that $B^*$ is continuously invertible, as well. Moreover, it is easy to see that $B^*=A^*$ on $\dom(A^*)\cap \kar(A^*)^{\bot}$, see also \cite[Lemma 2.4]{TW14}. Thus, the assertion follows.
  
  In order to prove \ref{tb2c}, we note that $\hat{A^*}$ is continuous. Next, it is easy to see that $\hat{B^*}$ extends $B^*$. We show that $\hat{B^*}$ is onto. For this, let $\psi\in \dom(B)^*$. Then there exists $w\in \dom(B)$ such that
  \[
      \langle w,v\rangle_{H_0}+\langle Bw,Bv\rangle_{H_1} = \psi(v)\quad (v\in \dom(B)).
  \]
  Define $\phi\coloneqq (B^{-1})^*w + Bw \in \rge(A)$. Then we compute for all $v\in \dom(B)$
  \begin{align*}
     (\hat{B^*}\phi) (v) & = \langle \phi,Bv\rangle_{H_1} \\
       & = \langle (B^{-1})^*w + Bw, Bv\rangle_{H_1} \\
       & = \langle w,B^{-1}Bv\rangle_{H_0} + \langle Bw,Bv\rangle_{H_1} \\
       & = \psi(v).
  \end{align*}
  Hence, $\hat{B^*}\phi = \psi$. We are left with showing that $\hat{B^*}$ is injective. Let $\hat{B^*}\phi=0$. Then, for all $v\in \dom(B)$ we have
  \[
     0=\langle \phi,Bv\rangle_{H_1}. 
  \]
  Hence, $\phi\in \dom(B^*)$ and $B^*\phi=0$. Thus, $\phi=0$, as $B^*$ is one-to-one. Hence, $\hat{B^*}$ is one-to-one.
\end{proof}

\begin{remark}\label{r:dadb} In the situation of the previous theorem, we remark here a small pecularity in statement \ref{tb2c}: One could also define
\[
   \tilde{A^*} \colon H_1 \to \dom(A)^*, \phi\mapsto (v\mapsto \langle \phi, Av\rangle_{H_1})
\]
to obtain an extension of $A^*$. In the following, we will restrict our attention to the consideration of $\hat{A^*}$. The reason for this is the following fact: \[\dom(A)^*\supseteq\rge(\tilde{A^*})\cong \rge(\hat{A^*})\subseteq \dom(B)^*,\] where the identification is given by
\[
   \hat{A^*}\phi \mapsto (\tilde{A^*}\phi)|_{\dom(B)}\quad (\phi\in H_1).
\]
Indeed, let $\phi\in H_1$. Then
\begin{align*}
   \sup_{\substack{v\in \dom(A),\\ \|v\|_{\dom(A)}\leq1}}|(\tilde{A^*}\phi)(v)| & =\sup_{\substack{v\in \dom(A),\\ \|v\|_{\dom(A)}\leq1}}|\langle \phi,Av\rangle_{H_1}|
   \\ & = \sup_{\substack{v\in \dom(A)\cap \kar(A)^{\bot},\\ \|v\|_{\dom(A)}\leq1}}|\langle \phi,Av\rangle_{H_1}|
   \\ & = \sup_{\substack{v\in \dom(B), \\ \|v\|_{\dom(B)}\leq1}}|\langle \phi,Av\rangle_{H_1}|
   \\ & = \sup_{\substack{v\in \dom(B),\\ \|v\|_{\dom(B)}\leq1}}|(\hat{A^*}\phi)(v)|.
\end{align*}
\end{remark}

The latter remark justifies the formulation in the div-curl lemma, which we state next.

\begin{theorem}\label{t:mr}
  Let $(A_0,A_1)$ be a closed sequence. Let $(u_n)_n, (v_n)_n$ in $H_1$ be weakly convergent. Assume
  \[
     (\hat{A_0^*}u_n)_n, (\hat{A_1} v_n)_n
  \]
 to be relatively compact in $\dom(A_0)^*$ and $\dom(A_1^*)^*$, respectively. Further, assume that $\kar(A_0^*)\cap \kar(A_1)$ is finite dimensional.
 
 Then 
 \[
    \lim_{n\to \infty} \langle u_n,v_n\rangle_{H_1} = \langle \lim_{n\to\infty} u_n, \lim_{n\to\infty} v_n\rangle_{H_1}.
 \]
\end{theorem}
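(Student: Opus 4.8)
The plan is to reduce the assertion to the behaviour of the three orthogonal components of $u_n$ and $v_n$ in the Hodge-type decomposition induced by the closed sequence $(A_0,A_1)$. First I would establish the orthogonal decomposition
\[
  H_1 = \rge(A_0) \oplus \bigl(\kar(A_0^*)\cap\kar(A_1)\bigr) \oplus \rge(A_1^*),
\]
using that closedness of the sequence together with the closed range theorem yields $H_1 = \rge(A_0)\oplus\kar(A_0^*)$ and $H_1=\kar(A_1)\oplus\rge(A_1^*)$, and then splitting $\kar(A_1) = \rge(A_0) \oplus \bigl(\kar(A_1)\cap\kar(A_0^*)\bigr)$ by means of the sequence property $\rge(A_0)\subseteq\kar(A_1)$. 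Writing $P_0,P_h,P_1$ for the three associated orthogonal projections, the algebraic fact I would exploit is that these subspaces are mutually orthogonal, so that $\langle u_n,v_n\rangle_{H_1}=\langle P_0u_n,P_0v_n\rangle+\langle P_hu_n,P_hv_n\rangle+\langle P_1u_n,P_1v_n\rangle$, and the same decomposition holds for the weak limits $u,v$.

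The crucial step is to turn the two compactness hypotheses into \emph{strong} convergence of one distinguished component of each sequence. Since $A_0v\in\rge(A_0)$, the functional $\hat{A_0^*}\phi$ depends only on $P_0\phi$, and by Theorem~\ref{t:tb2}\ref{tb2c} its restriction $\hat{B^*}=\hat{A_0^*}|_{\rge(A_0)}$ is an isomorphism onto $\dom(B)^*\cong\dom(A_0)^*$ (the identification being the one recorded in Remark~\ref{r:dadb}); hence $P_0u_n = (\hat{B^*})^{-1}\hat{A_0^*}u_n$. As $(\hat{B^*})^{-1}$ is bounded and $(\hat{A_0^*}u_n)_n$ is relatively compact by assumption, $(P_0u_n)_n$ is relatively compact in $H_1$; combined with $P_0u_n\rightharpoonup P_0u$ (weak continuity of $P_0$) and uniqueness of weak limits this forces $P_0u_n\to P_0u$ strongly. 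Applying the identical argument to $A_1^*$ in place of $A$ in Theorem~\ref{t:tb2} — so that $(A_1^*)^*=A_1$ and $\hat{A_1}$ plays the role of $\hat{A_0^*}$ — I obtain $P_1v_n\to P_1v$ strongly from relative compactness of $(\hat{A_1}v_n)_n$ in $\dom(A_1^*)^*$. For the harmonic component I would simply invoke that $\kar(A_0^*)\cap\kar(A_1)$ is finite dimensional, so that weak and strong convergence coincide there, giving $P_hu_n\to P_hu$ and $P_hv_n\to P_hv$ strongly.

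Finally I would pass to the limit in each of the three summands. In the first term $P_0u_n\to P_0u$ strongly while $P_0v_n\rightharpoonup P_0v$ weakly and remains bounded, so the strong-weak pairing converges to $\langle P_0u,P_0v\rangle$; symmetrically, in the third term $P_1v_n\to P_1v$ strongly against $P_1u_n\rightharpoonup P_1u$, giving $\langle P_1u,P_1v\rangle$; and the middle term converges since both factors converge strongly. Summing and reading the orthogonal decomposition of $\langle u,v\rangle_{H_1}$ backwards then yields $\langle u_n,v_n\rangle_{H_1}\to\langle u,v\rangle_{H_1}$, as claimed.

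I expect the main obstacle to lie in the second paragraph: making the identification $\dom(A_0)^*\cong\dom(B)^*$ precise and checking that the hypothesis ``relatively compact in $\dom(A_0)^*$'' transports through the bounded inverse $(\hat{B^*})^{-1}$ to relative compactness of $(P_0u_n)_n$ in $H_1$, together with the careful verification that the dualized construction of Theorem~\ref{t:tb2}\ref{tb2c} applies verbatim to $A_1^*$. Once these isomorphisms are correctly in place, the remaining arguments are routine strong-weak continuity considerations.
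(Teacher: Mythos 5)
Your proposal is correct and follows essentially the same route as the paper: its proof of Theorem \ref{t:mr} uses precisely the decomposition $H_1=\rge(A_0)\oplus\bigl(\kar(A_0^*)\cap\kar(A_1)\bigr)\oplus\rge(A_1^*)$, converts the compactness hypotheses into strong convergence of $\pi_{\rge(A_0)}u_n$ and $\pi_{\rge(A_1^*)}v_n$ via the isomorphism $\hat{B^*}$ (packaged there as Corollary \ref{cor:tb3}, with the $\dom(A_0)^*$-versus-$\dom(B)^*$ identification handled by Remark \ref{r:dadb}, exactly the point you flag), and treats the finite-dimensional harmonic component by compactness of its projection. The only cosmetic difference is that the paper splits $\langle u_n,v_n\rangle_{H_1}$ slightly asymmetrically rather than into your three diagonal terms, which changes nothing.
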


We emphasise that in this abstract version of the $\dive$-$\curl$ lemma \emph{no} compactness condition on the operators $A_0$ and $A_1$ is needed. 

On the other hand, it is possible to formulate a statement of similar type without the usage of (abstract) distribution spaces. For this, however, we have to assume that $(A_0,A_1)$ is a \emph{compact} sequence. The author is indebted to Dirk Pauly for a discussion on this theorem. It is noteworthy that the proof for both Theorem \ref{t:mr} and \ref{t:mrcc} follows a commonly known standard strategy to prove the so-called  `Maxwell compactness property', see \cite{W74,P84,BPS16}. 

\begin{theorem}\label{t:mrcc} Let $(A_0,A_1)$ be a compact sequence. Let $(u_n)_n, (v_n)_n$ be weakly convergent sequences in $\dom(A_0^*)$ and $\dom(A_1)$, respectively. 

 Then 
 \[
    \lim_{n\to \infty} \langle u_n,v_n\rangle_{H_1} = \langle \lim_{n\to\infty} u_n, \lim_{n\to\infty} v_n\rangle_{H_1}.
 \]
\end{theorem}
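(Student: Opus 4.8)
The plan is to reduce the pairing to a sum of terms in which one factor always converges strongly, by passing to the orthogonal Helmholtz-type decomposition of $H_1$ attached to the sequence; throughout, denote by $u$ and $v$ the weak limits of $(u_n)_n$ and $(v_n)_n$. First I would record that, by Theorem~\ref{t:tb}\ref{t4}, a compact sequence is closed, so that $\rge(A_0)$ is closed and, via the closed range theorem and Theorem~\ref{t:tb}\ref{t2}, $\rge(A_1^*)$ is closed as well. Taking orthogonal complements in the inclusion $\rge(A_0)\subseteq\kar(A_1)$ and using $\kar(A_1)^\bot=\rge(A_1^*)$ together with $\rge(A_0)^\bot=\kar(A_0^*)$ yields $\rge(A_1^*)\subseteq\kar(A_0^*)$, whence the orthogonal splitting
\[
  H_1=\rge(A_0)\oplus\mathcal{H}\oplus\rge(A_1^*),\qquad \mathcal{H}\coloneqq\kar(A_0^*)\cap\kar(A_1),
\]
where $\mathcal{H}$ is finite dimensional by Theorem~\ref{t:tb}\ref{t5}. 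Decomposing $u_n$ and $v_n$ along these three mutually orthogonal subspaces and discarding the vanishing cross terms, the pairing becomes
\[
  \langle u_n,v_n\rangle_{H_1}=\langle\pi_{\rge(A_0)}u_n,\pi_{\rge(A_0)}v_n\rangle_{H_1}+\langle\pi_{\mathcal{H}}u_n,\pi_{\mathcal{H}}v_n\rangle_{H_1}+\langle\pi_{\rge(A_1^*)}u_n,\pi_{\rge(A_1^*)}v_n\rangle_{H_1}.
\]

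The decisive step is to produce strong convergence of two of the projected sequences. I would argue that, because $\pi_{\mathcal{H}}u_n$ and $\pi_{\rge(A_1^*)}u_n$ both lie in $\kar(A_0^*)$, the component $\pi_{\rge(A_0)}u_n=u_n-\pi_{\mathcal{H}}u_n-\pi_{\rge(A_1^*)}u_n$ still lies in $\dom(A_0^*)$ with $A_0^*\pi_{\rge(A_0)}u_n=A_0^*u_n$, while $\pi_{\rge(A_0)}u_n\in\rge(A_0)\subseteq\kar(A_1)\subseteq\dom(A_1)$ with vanishing $A_1$-image. Thus $(\pi_{\rge(A_0)}u_n)_n$ is bounded in $\dom(A_1)\cap\dom(A_0^*)$, since a weakly convergent sequence is bounded. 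Symmetrically, $(\pi_{\rge(A_1^*)}v_n)_n$ is bounded in $\dom(A_1)\cap\dom(A_0^*)$. Invoking the compactness of $\dom(A_1)\cap\dom(A_0^*)\hookrightarrow H_1$, both sequences are relatively compact in $H_1$; since the projections are weakly continuous their only possible accumulation points are $\pi_{\rge(A_0)}u$ and $\pi_{\rge(A_1^*)}v$, so a routine subsequence argument promotes this to strong convergence of the full sequences. The middle components converge strongly for free, as $\mathcal{H}$ is finite dimensional and weak and strong convergence coincide there.

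It then remains to pass to the limit term by term, using the elementary fact that the pairing of a strongly convergent with a weakly convergent sequence converges to the pairing of the limits: in the first summand $\pi_{\rge(A_0)}u_n$ is the strong factor, in the third $\pi_{\rge(A_1^*)}v_n$ is, and in the middle both factors converge strongly. Summing the three limits and reversing the orthogonal decomposition recovers $\langle u,v\rangle_{H_1}$, as claimed. I expect the main obstacle to be the second step, namely the verification that the projected components indeed belong to $\dom(A_1)\cap\dom(A_0^*)$ with graph norms dominated by $\|u_n\|_{\dom(A_0^*)}$ and $\|v_n\|_{\dom(A_1)}$; this is exactly where the sequence property $\rge(A_0)\subseteq\kar(A_1)$ and the closedness of the ranges are needed so that the compactness hypothesis applies to precisely the right pieces, and it is the crux of the ``Maxwell compactness'' strategy alluded to above.
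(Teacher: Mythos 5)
Your proof is correct, and it shares the paper's skeleton: the same orthogonal decomposition $H_1=\rge(A_0)\oplus\bigl(\kar(A_0^*)\cap\kar(A_1)\bigr)\oplus\rge(A_1^*)$, strong convergence of the two range components together with the finite-rank projection onto the middle space, and the same term-by-term strong--weak limit computation at the end. Where you genuinely diverge is at the crux, namely the strong convergence of $\pi_{\rge(A_0)}u_n$ and $\pi_{\rge(A_1^*)}v_n$. The paper obtains this from Corollary \ref{cor:tb3b}: writing $\iota_{\rge(A)}^*\phi_n=(B^*)^{-1}\iota_{\rge(A^*)}^*A^*\phi_n$ with the reduced operator $B$ of Theorem \ref{t:tb2} (applied with $A=A_0$ for $u_n$ and $A=A_1^*$ for $v_n$), it converts weak convergence of $A_0^*u_n$, resp.\ $A_1v_n$, into strong convergence via compactness of $(B^*)^{-1}$, which in turn rests on the partial compactness statements of Theorem \ref{t:tb}\ref{t5}. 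You instead observe that the projected sequences themselves are bounded in $\dom(A_1)\cap\dom(A_0^*)$ --- since the discarded components lie in $\kar(A_0^*)$, resp.\ $\kar(A_1)$, one has $A_0^*\pi_{\rge(A_0)}u_n=A_0^*u_n$ and $A_1\pi_{\rge(A_0)}u_n=0$, and symmetrically for $v_n$ --- and then you apply the compactness hypothesis in its definitional form, finishing with weak continuity of the projections and a subsequence argument; all the auxiliary facts you use ($\rge(A_1^*)\subseteq\kar(A_0^*)$, closedness of the ranges, boundedness of weakly convergent sequences in the graph norms) are justified. This variant is more self-contained: you bypass Theorem \ref{t:tb2} and Corollaries \ref{cor:tb3} and \ref{cor:tb3b} entirely, invoking Theorem \ref{t:tb} only for closedness and for the finite-dimensionality of $\kar(A_0^*)\cap\kar(A_1)$. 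What the paper's detour buys is uniformity: the identical concluding computation proves Theorem \ref{t:mr} and Theorem \ref{t:mrcc} simultaneously, with Corollary \ref{cor:tb3} and Corollary \ref{cor:tb3b} as interchangeable modules, whereas your direct argument has no analogue in the setting of Theorem \ref{t:mr}, where no compact embedding is available and strong convergence must be extracted from relative compactness of $(\hat{A_0^*}u_n)_n$ in $\dom(B)^*$ via the isomorphism $\hat{B^*}$.
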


In order to prove Theorem \ref{t:mr} and \ref{t:mrcc} we formulate a corollary of Theorem \ref{t:tb2} first. 

\begin{corollary}\label{cor:tb3} Let $H_0$, $H_1$ be Hilbert spaces, $A\colon \dom(A)\subseteq H_0\to H_1$ densely defined and closed. Assume that $\rge(A)\subseteq H_1$ is closed. Let $B$ be as in Theorem \ref{t:tb}. For $(\phi_n)_n$ in $H_1$ the following statements are equivalent:
\begin{enumerate}[label=(\roman*)]
 \item\label{tb3i} $(\hat{A^*}\phi_n)_n$ is relatively compact in $\dom(B)^*$;
 \item\label{tb3ii} $(\pi_{\rge(A)}\phi_n)_n$ is relatively compact in $H_1$.
\end{enumerate} 
If $(\phi_n)_n$ weakly converges to $\phi$ in $H_1$, then either of the above conditions imply $\pi_{\rge(A)}\phi_n\to \pi_{\rge(A)}\phi$ in $H_1$.
\end{corollary}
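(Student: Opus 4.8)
The plan is to reduce everything to the isomorphism $\hat{B^*}\colon \rge(A)\to \dom(B)^*$ furnished by Theorem~\ref{t:tb2}\ref{tb2c}, by factorising $\hat{A^*}$ through the orthogonal projection $\pi_{\rge(A)}$. First I would record the identity
\[
  \hat{A^*} = \hat{B^*}\,\iota_{\rge(A)}^* \qquad\text{on } H_1.
\]
To verify it, fix $\phi\in H_1$ and $v\in\dom(B)$. Since $Av=\iota_{\rge(A)}Bv$ lies in $\rge(A)$ and $\iota_{\rge(A)}$ is the canonical isometry onto the closed subspace $\rge(A)\subseteq H_1$, one computes
\[
  (\hat{A^*}\phi)(v)=\langle \phi, Av\rangle_{H_1}=\langle \iota_{\rge(A)}^*\phi, Bv\rangle_{\rge(A)}=\bigl(\hat{B^*}(\iota_{\rge(A)}^*\phi)\bigr)(v),
\]
which is the claimed factorisation. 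Note that $\pi_{\rge(A)}=\iota_{\rge(A)}\iota_{\rge(A)}^*$, so that $\iota_{\rge(A)}^*\phi$ and $\pi_{\rge(A)}\phi$ agree up to the isometry $\iota_{\rge(A)}$.

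Next, I would exploit that both maps in this factorisation preserve relative compactness in either direction. Indeed, $\iota_{\rge(A)}$ is an isometric embedding onto the closed subspace $\rge(A)$, so a sequence $(\iota_{\rge(A)}^*\phi_n)_n$ is relatively compact in $\rge(A)$ if and only if $(\pi_{\rge(A)}\phi_n)_n=(\iota_{\rge(A)}\iota_{\rge(A)}^*\phi_n)_n$ is relatively compact in $H_1$; and $\hat{B^*}$, being a Banach-space isomorphism by Theorem~\ref{t:tb2}\ref{tb2c}, maps relatively compact sets to relatively compact sets, as does its continuous inverse. Chaining these equivalences with the factorisation yields the equivalence of \ref{tb3i} and \ref{tb3ii}.

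Finally, for the convergence statement, suppose $\phi_n\rightharpoonup\phi$ in $H_1$. As $\pi_{\rge(A)}$ is bounded and linear, it is weak-weak continuous, so $\pi_{\rge(A)}\phi_n\rightharpoonup\pi_{\rge(A)}\phi$. Under either \ref{tb3i} or \ref{tb3ii} the sequence $(\pi_{\rge(A)}\phi_n)_n$ is relatively compact in $H_1$; the standard subsequence argument (every subsequence has a strongly convergent sub-subsequence, whose limit must equal the weak limit $\pi_{\rge(A)}\phi$) then upgrades weak to strong convergence, giving $\pi_{\rge(A)}\phi_n\to\pi_{\rge(A)}\phi$ in $H_1$.

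The main obstacle I anticipate is purely bookkeeping: keeping the identification of $\rge(A)$ as an abstract Hilbert space versus a closed subspace of $H_1$ consistent throughout the factorisation, and making sure the transferring map is an \emph{isomorphism} (so that relative compactness passes in both directions), which is exactly the content of Theorem~\ref{t:tb2}\ref{tb2c} rather than the mere continuity of $\hat{A^*}$.
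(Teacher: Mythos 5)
Your proposal is correct and follows essentially the same route as the paper: the factorisation $\hat{A^*}=\hat{B^*}\iota_{\rge(A)}^*$ (which the paper obtains via $\kar(\hat{A^*})=\kar(A^*)=\rge(A)^\bot$, hence $\hat{A^*}\phi=\hat{A^*}\pi_{\rge(A)}\phi=\hat{B^*}\iota_{\rge(A)}^*\phi$), the transfer of relative compactness through the isomorphism $\hat{B^*}$ and the isometry $\iota_{\rge(A)}$, and the weak-continuity-plus-subsequence argument for the final assertion. Your direct verification of the factorisation on $\dom(B)$ is a harmless variant of the paper's kernel argument, so nothing further is needed.
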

\begin{proof}
  From $\rge(A)=\kar(A^*)^{\bot}$ and $\kar(\hat{A^*})=\kar(A^*)$, we deduce that $\hat{A^*}\phi=\hat{A^*}\pi_{\rge(A)}\phi$ for all $\phi\in H_1$. Next, $\hat{A^*}\pi_{\rge(A)}\phi= \hat{B^*}\iota_{\rge(A)}^* \phi$ for all $\phi\in H_1$. Thus, as $\hat{B^*}$ is an isomorphism by Theorem \ref{t:tb2}, we obtain that  \ref{tb3i} is equivalent to $(\iota_{\rge(A)}^* \phi_n)_n$ being relatively compact in $\rge(A)$. The latter in turn is equivalent to \ref{tb3ii}, since $(\iota_{\rge(A)}^* \phi_n)_n$ being relatively compact is (trivially) equivalent to the same property of $(\iota_{\rge(A)}\iota_{\rge(A)}^* \phi_n)_n=(\pi_{\rge(A)} \phi_n)_n$.
  
  The last assertion follows from the fact that $\pi_{\rge(A)}$ is (weakly) continuous. Indeed, weak convergence of $(\phi_n)_n$ to $\phi$ implies weak convergence of $(\pi_{\rge(A)}\phi_n)_n$ to $\pi_{\rge(A)}\phi$. This together with relative compactness implies $\pi_{\rge(A)}\phi_n\to \pi_{\rge(A)}\phi$ with the help of a subsequence argument. 
\end{proof}

\begin{corollary}\label{cor:tb3b}
 Let $H_0$, $H_1$ be Hilbert spaces, $A\colon \dom(A)\subseteq H_0\to H_1$ densely defined and closed. Assume $\dom(A)\cap \kar(A)^{\bot_{H_0}}\hookrightarrow H_0$ compact. Let $(\phi_n)_n$ weakly converging to $\phi$ in $\dom(A^*)$. Then 
$\lim_{n\to\infty} \pi_{\rge(A)}\phi_n=\pi_{\rge(A)}\phi$ in $H_1$.
\end{corollary}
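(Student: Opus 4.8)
The plan is to reduce the statement to the continuously invertible operator $B$ of Theorem~\ref{t:tb2} and to show that the compact embedding hypothesis is exactly the statement that the \emph{dual} inverse $(B^*)^{-1}$ is a compact operator; once this is established, the weakly convergent data $A^*\phi_n$ are mapped to strongly convergent data, which is the assertion. Before invoking Theorem~\ref{t:tb2} I first need $\rge(A)$ to be closed, which is not assumed but is forced by the hypothesis: a standard Poincar\'e-type argument shows that if no constant $c>0$ satisfied $\norm{Au}_{H_1}\geq c\norm{u}_{H_0}$ for all $u\in\dom(A)\cap\kar(A)^{\bot}$, then a norming sequence would, via the compact embedding, admit a subsequence converging strongly in $H_0$ to a nonzero element of $\kar(A)\cap\kar(A)^{\bot}=\{0\}$ (using that $A$ is closed), a contradiction. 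Hence $\rge(A)$ is closed and Theorem~\ref{t:tb2} as well as Corollary~\ref{cor:tb3} apply; in particular $B=\iota_{\rge(A)}^*A\iota_{\rge(A^*)}$ and $B^*=\iota_{\rge(A^*)}^*A^*\iota_{\rge(A)}$ are continuously invertible by Theorem~\ref{t:tb2}\ref{tb2a} and \ref{tb2b}.

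The second step is to show that $(B^*)^{-1}\colon\rge(A^*)\to\rge(A)\subseteq H_1$ is compact. The key observation is that $\dom(B)=\dom(A)\cap\kar(A)^{\bot}$, so that $B^{-1}\colon\rge(A)\to\dom(B)$ is bounded with respect to the graph norm on $\dom(B)$. Composing $B^{-1}$ with the compact embedding $\dom(B)\hookrightarrow H_0$ assumed in the corollary exhibits $B^{-1}\colon\rge(A)\to\rge(A^*)$ as a compact operator. By Schauder's theorem its adjoint $(B^{-1})^*=(B^*)^{-1}$ is then compact as well.

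The third step is the algebraic identification $\pi_{\rge(A)}\phi=(B^*)^{-1}A^*\phi$ for $\phi\in\dom(A^*)$. Writing $\phi=\pi_{\rge(A)}\phi+\pi_{\kar(A^*)}\phi$ and using $\kar(A^*)\subseteq\dom(A^*)$ together with $\rge(A)=\kar(A^*)^{\bot}$, one checks that $\pi_{\rge(A)}\phi\in\dom(A^*)\cap\rge(A)=\dom(B^*)$ and that $A^*\pi_{\kar(A^*)}\phi=0$, whence $B^*\pi_{\rge(A)}\phi=A^*\phi$, i.e.\ $\pi_{\rge(A)}\phi=(B^*)^{-1}A^*\phi$. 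The care here lies in the bookkeeping with the embeddings $\iota$ and the projections $\pi$ and in verifying the domain membership $\pi_{\rge(A)}\phi_n\in\dom(B^*)$.

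Finally I would conclude. Weak convergence of $(\phi_n)_n$ to $\phi$ in the graph-norm space $\dom(A^*)$ is equivalent to weak convergence of both components, so in particular $A^*\phi_n\rightharpoonup A^*\phi$ weakly in $H_0$, hence in the closed subspace $\rge(A^*)$. Since $(B^*)^{-1}$ is compact, it maps this weakly convergent sequence to a strongly convergent one, so $\pi_{\rge(A)}\phi_n=(B^*)^{-1}A^*\phi_n\to(B^*)^{-1}A^*\phi=\pi_{\rge(A)}\phi$ in $H_1$. Equivalently, boundedness of $(A^*\phi_n)_n$ together with compactness of $(B^*)^{-1}$ yields relative compactness of $(\pi_{\rge(A)}\phi_n)_n$, which is precisely condition \ref{tb3ii} of Corollary~\ref{cor:tb3}, whose final assertion then delivers the claim. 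I expect the main obstacle to be precisely the translation of the \emph{primal} compact embedding into compactness of the \emph{dual} inverse $(B^*)^{-1}$ via Schauder's theorem, since everything else is either the standard closed-range argument or routine identifications.
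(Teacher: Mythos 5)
Your proposal is correct and follows essentially the same route as the paper's own proof: the contradiction argument yielding the Poincar\'e inequality and closedness of $\rge(A)$, the identification $\pi_{\rge(A)}\phi=(B^*)^{-1}\iota_{\rge(A^*)}^*A^*\phi$, and the weak-to-strong conclusion via compactness of $(B^*)^{-1}$. The only difference is cosmetic: you spell out the Schauder-adjoint step and the $\dom(B^*)$-membership bookkeeping that the paper leaves implicit in the phrase ``$B^{-1}$ is compact by assumption and thus so is $(B^*)^{-1}$''.
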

\begin{proof}
We note that -- by a well-known contradiction argument -- $\dom(A)\cap \kar(A)^{\bot_{H_0}}\hookrightarrow H_0$ compact implies the Poincar\'e type inequality\[
   \exists c>0 \forall \phi\in \dom(A)\cap \kar(A)^\bot: \|\phi\|_{H_0}\leq c \|A\phi\|_{H_1}.
\]
The latter together with the closedness of $A$ implies the closedness of $\rge(A)\subseteq H_0$. Thus, Theorem \ref{t:tb2} is applicable. Let $B$ as in Theorem \ref{t:tb2}.

We observe that the assertion is equivalent to  $\lim_{n\to\infty}\iota_{\rge(A)}^*\phi_n=\iota_{\rge(A)}^*\phi$ in $\rge(A)$.
 We compute with the help Theorem \ref{t:tb2} for $n\in \mathbb{N}$ 
 \begin{align*}
    \iota_{\rge(A)}^*\phi_n & = (B^*)^{-1}B^*\iota_{\rge(A)}^*\phi_n 
    \\  & = (B^*)^{-1}\iota_{\rge(A^*)}^*A^* \iota_{\rge(A)}\iota_{\rge(A)}^*\phi_n
    \\ & = (B^*)^{-1}\iota_{\rge(A^*)}^*A^* \pi_{\rge(A)}\phi_n
    \\ & = (B^*)^{-1}\iota_{\rge(A^*)}^*A^*\phi_n.
 \end{align*}
 By hypothesis, $A^*\phi_n\rightharpoonup A^*\phi$ in $H_0$ and so $\iota_{\rge(A^*)}^*A^*\phi_n\rightharpoonup \iota_{\rge(A^*)}^*A^*\phi$ in $\rge(A^*)$ as $n\to\infty$ since $\iota_{\rge(A^*)}^*$ is (weakly) continuous.  Next $B^{-1}$ is compact by assumption and thus so is $(B^*)^{-1}$. Therefore $(B^*)^{-1}\iota_{\rge(A^*)}^*A^*\phi_n\to (B^*)^{-1}\iota_{\rge(A^*)}^*A^*\phi$ in $\iota_{\rge(A)}$. The assertion follows from $(B^*)^{-1}\iota_{\rge(A^*)}^*A^*\phi=\iota_{\rge(A)}^*\phi$.
\end{proof}

\begin{proof}[Proof of Theorem \ref{t:mr} and Theorem \ref{t:mrcc}] By the sequence property, we deduce that $\pi_{\rge(A_0)}\leq \pi_{\kar(A_1)}$ and $\pi_{\rge(A_1^*)}\leq \pi_{\kar(A_0^*)}$. By Corollary \ref{cor:tb3} (Theorem \ref{t:mr}) or Corollary \ref{cor:tb3b} (Theorem \ref{t:mrcc}), we deduce that $\pi_{\rge(A_0)}u_n\to\pi_{\rge(A_0)}u$ and $\pi_{\rge(A_1^*)} v_n\to \pi_{\rge(A_1^*)}v$ in $H_1$. From $\kar(A_1)\cap \kar(A_0^*)$ being finite-dimensional (cf. Theorem \ref{t:tb}), we obtain $\pi_{\kar(A_1)\cap \kar(A_0^*)}u_n\to \pi_{\kar(A_1)\cap \kar(A_0^*)}u$ as $\pi_{\kar(A_1)\cap \kar(A_0^*)}$ is compact. Thus, we obtain for $n\in \mathbb{N}$
\begin{align*}
  \langle u_n, v_n\rangle_{H_1}& = \langle (\pi_{\rge(A_0)}+\pi_{\kar(A_0^*)\cap \kar(A_1)}+\pi_{\kar(A_0^*)\cap \rge(A_1^*)}) u_n, (\pi_{\rge(A_1^*)}+\pi_{\kar(A_1)}) v_n\rangle_{H_1}
  \\   & = \langle u_n, \pi_{\rge(A_1^*)}v_n\rangle_{H_1} 
  \\ & \quad
   +\langle (\pi_{\rge(A_0)}+\pi_{\kar(A_0^*)\cap \kar(A_1)}+\pi_{\kar(A_0^*)\cap \rge(A_1^*)}) u_n,\pi_{\kar(A_1)} v_n\rangle_{H_1}
   \\ & = \langle u_n, \pi_{\rge(A_1^*)}v_n\rangle_{H_1} 
   \\ & \quad+\langle \pi_{\rge(A_0)}u_n,\pi_{\kar(A_1)} v_n\rangle_{H_1} +
   \langle \pi_{\kar(A_0^*)\cap \kar(A_1)} u_n,\pi_{\kar(A_1)} v_n\rangle_{H_1} 
    \\ & \to  \langle \lim_{n\to\infty} u_n,\lim_{n\to\infty} v_n\rangle_{H_1}.\qedhere
\end{align*} 
\end{proof}

A closer look at the proof of our main result reveals the following converse of Theorem \ref{t:mr}:
\begin{theorem}\label{t:mrc}
  Let $(A_0,A_1)$ be a closed sequence. Assume that for all weakly convergent sequences $(u_n)_n, (v_n)_n$ in $\dom(A_0^*)$ and $\dom(A_1)$, respectively, we obtain
 \[
    \lim_{n\to \infty} \langle u_n,v_n\rangle_{H_1} = \langle \lim_{n\to\infty} u_n, \lim_{n\to\infty} v_n\rangle_{H_1}.
 \]
 Then $\kar(A_0^*)\cap \kar(A_1)$ is finite-dimensional.
\end{theorem}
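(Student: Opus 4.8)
The plan is to argue by contraposition: I would assume that the harmonic space $\mathcal{H} \coloneqq \kar(A_0^*) \cap \kar(A_1)$ is infinite-dimensional and use this to manufacture a single pair of admissible, weakly convergent sequences that violates the limit identity. Since $A_0$ is densely defined and closed, its adjoint $A_0^*$ is closed, and $A_1$ is closed by hypothesis, so both $\kar(A_0^*)$ and $\kar(A_1)$ are closed subspaces of $H_1$. Hence $\mathcal{H}$ is a closed subspace, i.e.\ itself a Hilbert space, and being infinite-dimensional it contains an orthonormal sequence $(e_n)_n$.

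The crux is the observation that such an orthonormal sequence is simultaneously admissible for both slots of the bilinear form and weakly null in the relevant graph topologies. Indeed, each $e_n$ lies in $\kar(A_0^*)\subseteq\dom(A_0^*)$ and in $\kar(A_1)\subseteq\dom(A_1)$, with $A_0^* e_n = 0$ and $A_1 e_n = 0$. As an orthonormal sequence, $(e_n)_n$ converges weakly to $0$ in $H_1$ by Bessel's inequality. I would then upgrade this to weak convergence in the graph spaces: for any $w\in\dom(A_0^*)$,
\[
   \langle e_n, w\rangle_{H_1} + \langle A_0^* e_n, A_0^* w\rangle_{H_0} = \langle e_n, w\rangle_{H_1} \to 0,
\]
because the graph term $\langle A_0^* e_n, A_0^* w\rangle_{H_0}$ vanishes identically; hence $e_n\rightharpoonup 0$ in $\dom(A_0^*)$. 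The same computation with $A_1$ in place of $A_0^*$ yields $e_n\rightharpoonup 0$ in $\dom(A_1)$.

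With this in place the contradiction is immediate. Taking $u_n\coloneqq e_n$ and $v_n\coloneqq e_n$ provides weakly convergent sequences in $\dom(A_0^*)$ and $\dom(A_1)$, respectively, with weak $H_1$-limits $\lim_{n\to\infty}u_n = \lim_{n\to\infty}v_n = 0$. The hypothesised identity would then force $\lim_{n\to\infty}\langle u_n,v_n\rangle_{H_1} = \langle 0,0\rangle_{H_1} = 0$, whereas $\langle e_n,e_n\rangle_{H_1} = \|e_n\|_{H_1}^2 = 1$ for every $n$. This shows $\mathcal{H}$ cannot be infinite-dimensional, which is the assertion.

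I do not expect a serious obstacle here; the one point demanding care is the passage from weak nullity of $(e_n)_n$ in $H_1$ to weak nullity in the graph-norm spaces $\dom(A_0^*)$ and $\dom(A_1)$. This is exactly where it is essential that the chosen vectors are \emph{harmonic}, i.e.\ annihilated by both operators, so that on these sequences the graph inner products collapse to the plain $H_1$ inner product and no control of the operator parts is needed.
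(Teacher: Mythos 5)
Your proof is correct and follows essentially the same route as the paper: assume the harmonic space $\kar(A_0^*)\cap\kar(A_1)$ is infinite-dimensional, produce a weakly null sequence in it whose inner products $\langle u_n,u_n\rangle_{H_1}$ do not tend to $0$ (the paper extracts this from its Proposition on $\sin(n\cdot)$ in $L^2(0,2\pi)$, you simply take an orthonormal sequence with $\|e_n\|=1$, which is if anything cleaner), and contradict the hypothesised limit identity. Your explicit verification that harmonicity makes the graph inner products collapse, so the sequence is weakly convergent in $\dom(A_0^*)$ and $\dom(A_1)$, is exactly the point the paper dismisses with ``clearly''.
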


For the proof of the latter, we need the next proposition:
\begin{proposition}\label{p:id} Let $H$ be a Hilbert space. Then the following statements are equivalent:
\begin{enumerate}
 \item $H$ is infinite-dimensional;
 \item there exists $(u_n)_n$ weakly convergent to $0$ such that $c\coloneqq \lim_{n\to\infty}\langle u_n,u_n\rangle$ exists with $c\ne 0$.
\end{enumerate} 
\end{proposition}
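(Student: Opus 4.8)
The plan is to prove the two implications of Proposition~\ref{p:id} separately, with the forward direction being the substantive one.

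For the implication (1)$\Rightarrow$(2), suppose $H$ is infinite-dimensional. Then I can choose an orthonormal sequence $(u_n)_n$ in $H$; such a sequence exists precisely because $H$ admits infinitely many mutually orthogonal unit vectors. The key observation is that any orthonormal sequence converges weakly to $0$: for a fixed $x\in H$, Bessel's inequality gives $\sum_n |\langle x,u_n\rangle|^2 \leq \|x\|^2 <\infty$, so the terms $\langle x,u_n\rangle$ tend to $0$, which is exactly $u_n\rightharpoonup 0$. On the other hand, $\langle u_n,u_n\rangle = 1$ for every $n$, so $c=\lim_{n\to\infty}\langle u_n,u_n\rangle = 1\neq 0$. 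This furnishes the desired sequence.

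For the converse (2)$\Rightarrow$(1), I would argue by contraposition: if $H$ is finite-dimensional, then weak convergence coincides with norm convergence (the weak and strong topologies agree on $\R^d$ or $\C^d$). Hence any sequence $(u_n)_n$ with $u_n\rightharpoonup 0$ actually satisfies $u_n\to 0$ in norm, and then $\langle u_n,u_n\rangle = \|u_n\|^2 \to 0$, forcing $c=0$. Thus no sequence as in (2) can exist, which is the contrapositive of the claim.

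I expect the main (indeed only) obstacle to be the weak-to-$0$ convergence of an orthonormal sequence in the forward direction; everything else is immediate. This is a standard consequence of Bessel's inequality, so the proof is genuinely routine, but it is worth recording the inequality explicitly since it is the crux. An alternative to invoking an orthonormal sequence directly would be to use the fact that in an infinite-dimensional space the unit sphere is not weakly closed, so $0$ lies in the weak closure of the unit sphere; a sequential version of this statement then yields unit vectors converging weakly to $0$, but the orthonormal construction is cleaner and entirely elementary.
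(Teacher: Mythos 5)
Your proof is correct, and in the forward direction it takes a genuinely different route from the paper. The paper argues that one may assume without loss of generality that $H=L^2(0,2\pi)$ and then takes the concrete sequence $u_n=\sin(n\cdot)$, which tends weakly to $0$ (a Riemann--Lebesgue-type fact) while $\langle u_n,u_n\rangle=\int_0^{2\pi}\sin^2(nx)\,dx$ stays bounded away from $0$. Your abstract orthonormal-sequence argument replaces both ingredients at once: Bessel's inequality yields $u_n\rightharpoonup 0$, and $\langle u_n,u_n\rangle=1$ trivially. It is worth noting that the paper's ``without loss of generality'' is itself justified by exactly the fact you make explicit: an infinite-dimensional Hilbert space contains an orthonormal sequence, hence a closed subspace unitarily equivalent to the separable space $L^2(0,2\pi)$, and a sequence that is weakly null in a closed subspace is weakly null in all of $H$ (pair a test vector through its orthogonal projection onto the subspace). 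So your proof is the more self-contained and elementary of the two, dispensing with both the model reduction and the trigonometric computation, whereas the paper's version buys a concrete, familiar example at the cost of that unspelled-out reduction step. The backward implications coincide in the two proofs: in finite dimensions weak and norm convergence agree, so any weakly null sequence has $\langle u_n,u_n\rangle=\|u_n\|^2\to 0$, forcing $c=0$.
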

\begin{proof}
 Let $H$ be infinite-dimensional. Without loss of generality, we may assume that $H=L^2(0,2\pi)$. Then $u_n\coloneqq \sin(n\cdot)\to 0$ weakly as $n\to\infty$ and
 \[
    \int_0^{2\pi} (\sin(nx))^2dx\to \frac{1}{2\pi}\int_0^{2\pi} (\sin(x))^2dx>0. 
 \]
 If $H$ is finite-dimensional, then weak convergence and strong convergence coincide, and the desired sequence cannot exist.
\end{proof}

\begin{proof}[Proof of Theorem \ref{t:mrc}]
 Suppose that $\kar(A_0^*)\cap \kar(A_1)$ is infinite-dimensional. Choose $(u_n)_n$ in $\kar(A_0^*)\cap \kar(A_1)$ as in Proposition \ref{p:id}. Then, clearly, $(u_n)_n$ is weakly convergent in $\dom(A_0^*)$ and $\dom(A_1)$. Hence,
 \[
 0= \langle \lim_{n\to\infty} u_n, \lim_{n\to\infty} u_n\rangle_{H_1} = \lim_{n\to\infty}\langle  u_n,  u_n\rangle_{H_1} = c \neq 0.\qedhere
 \]
\end{proof}

We will need the next abstract results for the proof of the div-curl lemma in the next section. Note that this is only needed for the formulation of the div-curl lemma where the divergence and the curl operators are considered to map into $H^{-1}$.
For this, we need some notation. Let $A\in L(H_0,H_1)$. The dual operator $A'\in L(H_1^*,H_0^*)$ is given by
\[
   (A'\phi)(\psi)\coloneqq \phi(A\psi). 
\]
We also define $A^\diamond \colon H_1 \to H_0^*$ via $A^\diamond \coloneqq A'R_{H_1}$, where $R_{H_1}\colon H_1\to H_1^*$ denotes the Riesz isomorphism.

\begin{proposition}\label{p:dual} Let $H_0$, $H_1$, $D$ Hilbert spaces, $A\colon \dom(A)\subseteq H_0\to H_1$ densely defined and closed. Assume $D\hookrightarrow\dom(A)$ continuously and $\rge(A|_D)= \rge(A)\subseteq H_1$ closed. Define $\mathcal{A}\colon D\to H_1, \phi\mapsto A\phi$. Then $\hat{A^*}=\mathcal{A}^\diamond$, that is, for every $v\in H_1$ we have $\mathcal{A}^\diamond v$ can be uniquely extended to an element of $\dom(A)^*$, the extension is given by $\hat{A^*}v$, where $\hat{A^*}$ is given in Theorem \ref{t:tb2}.
\end{proposition}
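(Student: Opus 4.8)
The plan is to compute both maps explicitly and read off their equality from their action on test vectors, invoking Remark~\ref{r:dadb} to pass between $\dom(B)^*$ and $\dom(A)^*$. First I would record that, since $D\hookrightarrow\dom(A)$ continuously and $A\colon\dom(A)\to H_1$ is bounded for the graph norm, the operator $\mathcal{A}=A|_D$ belongs to $L(D,H_1)$, so that $\mathcal{A}^\diamond=\mathcal{A}'R_{H_1}$ is a well-defined element of $L(H_1,D^*)$. Unwinding the definitions of the dual operator and the Riesz isomorphism gives, for every $v\in H_1$ and $\phi\in D$, the formula $(\mathcal{A}^\diamond v)(\phi)=\langle v,A\phi\rangle_{H_1}$. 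The asserted identity $\hat{A^*}=\mathcal{A}^\diamond$ is then understood modulo the canonical identifications $\dom(B)^*\cong\rge(\tilde{A^*})\subseteq\dom(A)^*$ of Remark~\ref{r:dadb} and the restriction $\dom(A)^*\to D^*$.

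Next I would produce the extension. By Remark~\ref{r:dadb} the functional $\tilde{A^*}v\in\dom(A)^*$, $\psi\mapsto\langle v,A\psi\rangle_{H_1}$, is identified isometrically with $\hat{A^*}v\in\dom(B)^*$ via restriction to $\dom(B)$. Since $D\subseteq\dom(A)$, restricting $\tilde{A^*}v$ back to $D$ reproduces precisely the formula for $\mathcal{A}^\diamond v$ computed above; hence $\tilde{A^*}v$ is an extension of $\mathcal{A}^\diamond v$ to an element of $\dom(A)^*$, and under the identification of Remark~\ref{r:dadb} it is the claimed $\hat{A^*}v$.

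The heart of the matter is uniqueness of the extension, and here the hypothesis $\rge(A|_D)=\rge(A)$ enters decisively. The point is that the value $\langle v,A\psi\rangle_{H_1}$ for an arbitrary $\psi\in\dom(A)$ is already pinned down by the data on $D$: choosing $\phi\in D$ with $A\phi=A\psi$ (possible exactly by the range condition) yields $\langle v,A\psi\rangle_{H_1}=\langle v,A\phi\rangle_{H_1}=(\mathcal{A}^\diamond v)(\phi)$. Equivalently, I would verify that the restriction map $\rge(\hat{A^*})\to D^*$ is injective: if $\mathcal{A}^\diamond v=0$, then $\langle v,A\phi\rangle_{H_1}=0$ for all $\phi\in D$, so by surjectivity of $A|_D$ onto $\rge(A)$ one gets $v\perp\rge(A)$, i.e.\ $\pi_{\rge(A)}v=0$, whence $\tilde{A^*}v=0$ because $\langle v,A\psi\rangle_{H_1}=\langle\pi_{\rge(A)}v,A\psi\rangle_{H_1}$ for every $\psi\in\dom(A)$. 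Combined with the isometry of Remark~\ref{r:dadb}, this shows that $\tilde{A^*}v$ (equivalently $\hat{A^*}v$) is the unique extension within $\rge(\hat{A^*})$, which is the canonical one.

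I expect this uniqueness step to be the main obstacle, because $D$ need not be dense in $\dom(A)$ for the graph norm: its graph-closure typically misses part of $\kar(A)$, so a naive Hahn--Banach argument does not deliver a single extension. The resolution is precisely the range condition, which guarantees that every test direction $A\psi$ with $\psi\in\dom(A)$ is already attained from $D$; consequently the action of $\langle v,A\cdot\rangle$ on $\dom(A)$ is completely determined by its action on $D$, which is what makes the canonical extension unique and justifies writing $\hat{A^*}=\mathcal{A}^\diamond$.
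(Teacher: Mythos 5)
Your proof is correct and follows essentially the same route as the paper's: agreement of $\hat{A^*}v$ and $\mathcal{A}^\diamond v$ on $D$, the canonical functional $\psi\mapsto\langle v,A\psi\rangle_{H_1}$ as the extension, and the range condition $\rge(A|_D)=\rge(A)$ to pin that extension down. The only cosmetic difference is that the paper implements your ``choose $\phi\in D$ with $A\phi=A\psi$'' step via the explicit bounded inverse $\mathcal{B}^{-1}$ of the reduced operator (taking $\phi=\mathcal{B}^{-1}A\psi$, which also yields the continuity you get directly from $\abs{\langle v,A\psi\rangle_{H_1}}\leq \norm{v}_{H_1}\norm{\psi}_{\dom(A)}$), and your explicit observation that uniqueness must be understood within the canonical class $\rge(\hat{A^*})$ --- since $D$ need not be graph-dense in $\dom(A)$ --- is precisely the content of the paper's terse remark that ``the uniqueness property follows from $\rge(\mathcal{A})=\rge(A)$''.
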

\begin{proof}
 Let $v \in H_1$. Then for all $\phi\in D$ we have
 \[
   \left(\hat{A^*}v\right)(\phi)= \langle v, A\phi\rangle_{H_1}=\langle v, \mathcal{A}\phi\rangle_{H_1} = R_{H_1}v (\mathcal{A}\phi) = (\mathcal{A}'R_{H_1}v)(\phi) = (\mathcal{A}^\diamond v)(\phi).
 \] Since $\mathcal{A}$ is continuous, it is densely defined and closed, hence $\mathcal{B}\coloneqq \iota_{\rge(\mathcal{A})}^*\mathcal{A}\iota_{\rge(\mathcal{A}^*)}$ is a Hilbert space isomorphism from $D\cap \kar(\mathcal{A})^{\bot_D}$ to $\rge(\mathcal{A})=\rge(A)$, by Theorem \ref{t:tb2}. Note that $\mathcal{A}\mathcal{B}^{-1}=\mathrm{id}_{\rge(\mathcal{A})}=\mathrm{id}_{\rge({A})}$. For $\psi\in \dom(A)$ and $v\in H_1$, we define
 \[
     \left(\mathcal{A}^\diamond v\right)_{\textnormal{e}}(\psi)\coloneqq \left(\mathcal{A}^\diamond v\right)(\mathcal{B}^{-1}A\psi).
 \]
 Next, if $\psi\in \dom(A)$, then with the above computations, we obtain
 \[
  \left(\mathcal{A}^\diamond v\right)_{\textnormal{e}}(\psi)= \left(\mathcal{A}^\diamond v\right)(\mathcal{B}^{-1}A\psi)=
  \langle v, \mathcal{A}\mathcal{B}^{-1}A\psi\rangle_{H_1}=\langle v, A\psi\rangle_{H_1}=\left(\hat{A^*}v\right)(\psi).
 \]
 Thus, $\left(\mathcal{A}^\diamond v\right)_{\textnormal{e}}$ indeed extends $\mathcal{A}^\diamond v$ and coincides with $\hat{A^*}v$. We infer also the continuity property for $\mathcal{A}^\diamond v$. The uniqueness property follows from $\rge(\mathcal{A})=\rge(A)$.
%
%
\end{proof}
From Proposition \ref{p:dual} it follows that $\rge(\hat{A^*})=\rge(\mathcal{A}^\diamond)$. This is the actual fact used in the following.

\begin{lemma}[{{\cite[Lemma 2.14]{PZ17}}}]\label{l:PZr} Let $H_0$, $H_1$, $H_2$ Hilbert spaces, $A\in L(H_1,H_2)$ onto. Then 
 $\rge(A^\diamond)\subseteq H_1^*$ is closed and $(A^\diamond)^{-1}\in L(\rge(A^\diamond),H_2)$.
\end{lemma}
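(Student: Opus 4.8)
The plan is to reduce all three assertions to a single coercivity estimate for $A^\diamond$, which will simultaneously yield injectivity, boundedness of the inverse, and closedness of the range. First I would unravel the definition $A^\diamond=A'R_{H_2}$ to obtain its explicit action: for $y\in H_2$ and $x\in H_1$,
\[
  (A^\diamond y)(x)=(A'R_{H_2}y)(x)=(R_{H_2}y)(Ax)=\langle Ax,y\rangle_{H_2},
\]
so that
\[
  \|A^\diamond y\|_{H_1^*}=\sup_{\|x\|_{H_1}\le 1}|\langle Ax,y\rangle_{H_2}|.
\]

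Next, since $A\in L(H_1,H_2)$ is onto, the open mapping theorem provides a $\delta>0$ such that the image under $A$ of the closed unit ball of $H_1$ contains the ball $\{w\in H_2:\|w\|_{H_2}\le\delta\}$. Inserting this containment into the supremum above gives the lower bound
\[
  \|A^\diamond y\|_{H_1^*}\ge\sup_{\|w\|_{H_2}\le\delta}|\langle w,y\rangle_{H_2}|=\delta\|y\|_{H_2}\quad(y\in H_2).
\]

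From this single inequality everything follows by standard reasoning. The estimate shows that $A^\diamond$ is injective, so $(A^\diamond)^{-1}\colon\rge(A^\diamond)\to H_2$ is well defined, and rearranging gives $\|(A^\diamond)^{-1}\psi\|_{H_2}\le\delta^{-1}\|\psi\|_{H_1^*}$ for $\psi\in\rge(A^\diamond)$, i.e.\ $(A^\diamond)^{-1}\in L(\rge(A^\diamond),H_2)$. For the closedness of the range I would take a sequence $(A^\diamond y_n)_n$ converging to some $\psi$ in $H_1^*$; the lower bound forces $(y_n)_n$ to be Cauchy in the complete space $H_2$, whence $y_n\to y$, and continuity of $A^\diamond$ yields $\psi=A^\diamond y\in\rge(A^\diamond)$, so $\rge(A^\diamond)\subseteq H_1^*$ is closed.

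The only genuinely delicate point is the coercivity estimate itself, together with the correct bookkeeping of the Riesz isomorphism and the duality pairing; once the explicit formula $(A^\diamond y)(x)=\langle Ax,y\rangle_{H_2}$ is established, the open mapping theorem does all the work and the remaining steps are routine completeness and bounded-inverse arguments. As an alternative one may invoke the closed range theorem directly: $A$ being onto is equivalent to $A'$ being injective with closed range, and since $R_{H_2}$ is an isomorphism this transfers verbatim to $A^\diamond=A'R_{H_2}$, after which the bounded inverse theorem supplies $(A^\diamond)^{-1}\in L(\rge(A^\diamond),H_2)$; I would nonetheless prefer the direct estimate, as it is self-contained and quantitative.
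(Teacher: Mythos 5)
Your proof is correct, but it takes a genuinely different route from the paper's. The paper argues softly: since $A^\diamond=A'R_{H_2}$ and $A'$ are unitarily equivalent via the Riesz isomorphism, it suffices to treat $A'$; surjectivity of $A$ then gives closedness of $\rge(A')$ by the closed range theorem and injectivity of $A'$, after which the closed graph (bounded inverse) theorem yields continuity of $(A')^{-1}$ on $\rge(A')$. You instead establish the quantitative coercivity estimate $\|A^\diamond y\|_{H_1^*}\geq\delta\|y\|_{H_2}$ directly from the open mapping theorem and extract all three conclusions from it at once: injectivity, the explicit bound $\|(A^\diamond)^{-1}\psi\|_{H_2}\leq\delta^{-1}\|\psi\|_{H_1^*}$, and closedness of the range via the standard bounded-below-plus-completeness argument. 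What each buys: your argument is self-contained (it bypasses the closed range theorem entirely, needing only the open mapping theorem), produces an explicit modulus for the inverse, and transfers verbatim to the Banach space setting where the Riesz reduction is unavailable; the paper's version is shorter because it delegates the work to two standard black boxes. One cosmetic imprecision: the open mapping theorem gives that the image of the \emph{open} unit ball contains the open ball of radius $\delta$, whereas the image of the closed unit ball need not contain the closed $\delta$-ball; since $\sup_{\|w\|_{H_2}<\delta}\abs{\langle w,y\rangle_{H_2}}=\delta\|y\|_{H_2}$, your estimate is unaffected, but the containment should be stated with open balls. Finally, note that the alternative you sketch in your last paragraph is exactly the paper's proof.
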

\begin{proof}
 By the Riesz representation theorem $A^\diamond$ and $A'$ are unitarily equivalent. Thus, it suffices to prove the assertions for $A'$ instead of $A^\diamond$.
 By the closed range theorem, $\rge(A')$ is closed, since $\rge(A)=H_2$ is. Next, $A$ is onto, hence $A'\in L(H_2^*,H_1^*)$ is one-to-one, and, thus, by the closed graph theorem, we obtain that $(A')^{-1}$ maps continuously from $\rge(A')$ into $H_2^*$.
\end{proof}

\begin{corollary}\label{cor:AB} Let $H_0$, $H_1$ be Hilbert spaces, $A\colon \dom(A)\subseteq H_0\to H_1$ densely defined and closed, $C\colon \dom(C)\subseteq H_0\to H_1$ densely defined, closed. Assume that $\rge(A)\subseteq H_1$ is closed, $\dom(C)\hookrightarrow \dom(A)$ continuous.

If 
\begin{equation}\label{ABeq}
   \rge(A)=\{A\phi; \phi\in \dom(C)\},
\end{equation}
then $\rge(\hat{A^*})=\dom(B)^*\subseteq \dom(C)^*$ is closed, where $B$ is given in Theorem \ref{t:tb2}.
\end{corollary}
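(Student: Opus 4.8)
The plan is to separate the statement into two parts: the equality $\rge(\hat{A^*})=\dom(B)^*$, which already holds under the standing hypotheses of Theorem \ref{t:tb2} (closed range of $A$) and requires neither $C$ nor \eqref{ABeq}; and the assertion that this space sits as a \emph{closed} subspace of $\dom(C)^*$, which is where $C$ and the surjectivity hypothesis \eqref{ABeq} enter. For the first part I would note that $\hat{A^*}$ maps into $\dom(B)^*$ by definition, so $\rge(\hat{A^*})\subseteq\dom(B)^*$, while the reverse inclusion follows from Theorem \ref{t:tb2}\ref{tb2c}: there $\hat{B^*}=\hat{A^*}|_{\rge(A)}$ is an isomorphism onto $\dom(B)^*$, hence $\dom(B)^*=\hat{A^*}(\rge(A))\subseteq\rge(\hat{A^*})$.

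Next I would realise $\rge(\hat{A^*})$ inside $\dom(C)^*$ by applying Proposition \ref{p:dual} with $D\coloneqq\dom(C)$ (a Hilbert space in its graph norm, since $C$ is closed) and $\mathcal{A}\coloneqq A|_{\dom(C)}\colon\dom(C)\to H_1$. The hypotheses of that proposition are met: $\dom(C)\hookrightarrow\dom(A)$ is continuous by assumption, and $\rge(A|_{\dom(C)})=\rge(A)$ is closed by \eqref{ABeq} together with the standing hypothesis. Proposition \ref{p:dual} then gives $\hat{A^*}=\mathcal{A}^\diamond$, and in particular $\rge(\hat{A^*})=\rge(\mathcal{A}^\diamond)$ (as recorded just after that proposition). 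Combined with the first step this yields $\dom(B)^*=\rge(\hat{A^*})=\rge(\mathcal{A}^\diamond)\subseteq\dom(C)^*$, which is the desired inclusion.

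For the closedness I would reduce to a \emph{surjective} operator so that Lemma \ref{l:PZr} applies. I would corestrict $\mathcal{A}$ to its range, setting $\tilde{\mathcal{A}}\colon\dom(C)\to\rge(A),\ \phi\mapsto A\phi$; this is bounded, because $\|A\phi\|_{H_1}\leq\|\phi\|_{\dom(A)}$ is controlled by $\|\phi\|_{\dom(C)}$ through the continuous embedding, and it is onto by \eqref{ABeq}. Writing $\mathcal{A}=\iota_{\rge(A)}\tilde{\mathcal{A}}$ and tracking the dual maps and the Riesz isomorphisms, one obtains the factorisation $\mathcal{A}^\diamond=\tilde{\mathcal{A}}^\diamond\iota_{\rge(A)}^*$; since $\iota_{\rge(A)}^*\colon H_1\to\rge(A)$ is the (surjective) orthogonal projection onto $\rge(A)$, this forces $\rge(\mathcal{A}^\diamond)=\rge(\tilde{\mathcal{A}}^\diamond)$. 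Lemma \ref{l:PZr}, applied to the onto map $\tilde{\mathcal{A}}\in L(\dom(C),\rge(A))$, then shows $\rge(\tilde{\mathcal{A}}^\diamond)\subseteq\dom(C)^*$ is closed, and chaining the three steps delivers $\dom(B)^*=\rge(\hat{A^*})=\rge(\mathcal{A}^\diamond)=\rge(\tilde{\mathcal{A}}^\diamond)$ as a closed subspace of $\dom(C)^*$.

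I expect the main obstacle to be the bookkeeping in the last step: verifying the factorisation identity $\mathcal{A}^\diamond=\tilde{\mathcal{A}}^\diamond\iota_{\rge(A)}^*$ correctly, keeping track of the interplay between the algebraic dual operator $(\cdot)'$, the Riesz maps $R_{H_1}$ and $R_{\rge(A)}$, and the Hilbert-space adjoint $\iota_{\rge(A)}^*$, and confirming that $\tilde{\mathcal{A}}$ is precisely the operator to which Lemma \ref{l:PZr} must be applied. The conceptual key is that trading the non-surjective $\mathcal{A}$ for its surjective corestriction onto the closed range $\rge(A)$ is exactly what makes Lemma \ref{l:PZr} available.
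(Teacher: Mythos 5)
Your proof is correct and follows essentially the same route as the paper's: identify $\rge(\hat{A^*})=\rge(\mathcal{A}^\diamond)$ via Proposition \ref{p:dual} with $D=\dom(C)$, obtain closedness in $\dom(C)^*$ from Lemma \ref{l:PZr} applied to the surjective corestriction of $A|_{\dom(C)}$ onto the closed range $\rge(A)$, and conclude $\rge(\hat{A^*})=\dom(B)^*$ from Theorem \ref{t:tb2}. The only difference is cosmetic: the paper defines $\mathcal{A}$ with codomain $\rge(A)$ from the outset, whereas you reconcile the $H_1$-valued and $\rge(A)$-valued versions explicitly through the factorisation $\mathcal{A}^\diamond=\tilde{\mathcal{A}}^\diamond\iota_{\rge(A)}^*$, which is a correct and slightly more careful rendering of the same argument.
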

\begin{proof}
  Since $\dom(C)\hookrightarrow \dom(A)$ continuously, we obtain that 
  \[
     {\mathcal{A}} \colon \dom(C)\to \rge(A)=\rge(B), \phi\mapsto A\phi
  \]
 is continuous. Moreover, by \eqref{ABeq}, we infer that ${\mathcal{A}}$ is onto. Hence, by Lemma \ref{l:PZr}, we obtain that $\rge({ {\mathcal{A}}}^\diamond)\subseteq \dom(C)^*$ is closed. Thus, we are left with showing that $\rge({\mathcal{A}}^\diamond)=\dom(B)^*$. By Proposition \ref{p:dual}, we realise that $\rge({\mathcal{A}}^\diamond)=\rge(\hat{A^*})=\rge(\hat{B^*})$. By Theorem \ref{t:tb2}, we get that $\hat{B^*}$ maps onto $\dom(B)^*$.
\end{proof}

\begin{remark}
 Corollary \ref{cor:AB} particularly applies to $A=C$.
\end{remark}

\section{The classical $\dive$-$\curl$ lemma}

Before we formulate Theorem \ref{t:dcl2}, the classical $\dive$-$\curl$ lemma, we need to introduce some differential operators from vector calculus.

\begin{definition} Let $\Omega\subseteq \mathbb{R}^d$ open. We define 
\begin{align*}
   \grad_\text{c} & \colon C_c^\infty(\Omega) \subseteq L^2(\Omega) \to L^2(\Omega)^d, \phi\mapsto (\partial_j\phi_j)_{j\in\{1,\ldots,d\}}
   \\ \dive_\text{c} & \colon C_c^\infty(\Omega) \subseteq L^2(\Omega)^d \to L^2(\Omega), (\phi_j)_{j\in\{1,\ldots,d\}}\mapsto \sum_{j=1}^d\partial_j\phi_j
   \\ \Grad_{\text{c}} & \colon C_c^\infty(\Omega)^d \subseteq L^2(\Omega)^d \to L^2(\Omega)^{d\times d}, (\phi_j)_{j\in\{1,\ldots,d\}}\mapsto (\partial_k\phi_j)_{j,k\in\{1,\ldots,d\}}
   \\ \Dive_{\text{c}} & \colon C_c^\infty(\Omega)^{d\times d} \subseteq L^2(\Omega)^{d\times d} \to L^2(\Omega)^{d}, (\phi_{j,k})_{j,k\in\{1,\ldots,d\}}\mapsto (\sum_{k=1}^d\partial_k\phi_{j,k})_{j\in\{1,\ldots,d\}}
   \\ \Curl_{\text{c}} & \colon C_c^\infty(\Omega)^{d} \subseteq L^2(\Omega)^{d} \to L^2(\Omega)^{d\times d}, (\phi_{j})_{j\in\{1,\ldots,d\}}\mapsto (\partial_k\phi_{j}-\partial_j\phi_{k})_{j,k\in\{1,\ldots,d\}}
   \\& \hspace{10cm}=\Grad \phi-\left(\Grad\phi\right)^T.
\end{align*}
Moreover, we set $\interior{\grad}\coloneqq \overline{\grad}_\text{c}$ and, similarly, $\interior{\dive},\interior{\Dive},\interior{\Curl},\interior{\Grad}$. Furthermore, we put $\dive\coloneqq -\interior{\grad}^*$, $\Dive\coloneqq -\interior{\Grad}^*$, $\grad\coloneqq -\interior{\dive}^*$, $\Grad\coloneqq -\interior{\Dive}^*$ and $\Curl\coloneqq (2\interior{\Dive} \skew)^*$, where $\skew A \coloneqq \frac{1}{2}(A-A^T)$ denotes the skew symmetric part of a matrix $A$.
\end{definition}

\begin{remark}
 It is an elementary computation to establish that the operators just introduced with $\interior{\ }$ are restrictions of the ones without.
\end{remark}

As usual, we define, $H^{-1}(\Omega)\coloneqq \dom(\interior{\grad})^*$. We may now formulate the classical $\dive$-$\curl$ lemma. We slightly rephrase the lemma, though.

\begin{theorem}[$\dive$-$\curl$ lemma -- global version]\label{t:dcl2} Let $(u_n)_n,(v_n)_n$ in $L^2(B(0,1))^d$ weakly convergent, with \[\overline{\bigcup_{n\in\N}(\spt u_n \cup \spt v_n)}\subseteq B(0,\delta)=\{ x\in \R^d; \|x\| \leq\delta\}\] for some $\delta<1$. Assume 
\[
   (\dive u_n)_n, (\Curl u_n)_n 
\] are relatively compact in $H^{-1}(B(0,1))$ and $H^{-1}(B(0,1))^{d\times d}$, resp. 

Then 
\[
   \lim_{n\to\infty}\langle u_n, v_n \rangle_{L^2} = \langle \lim_{n\to\infty}u_n,\lim_{n\to\infty}v_n\rangle_{L^2}.
\] 
\end{theorem}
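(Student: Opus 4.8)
The plan is to realise Theorem \ref{t:dcl2} as an instance of the abstract div-curl lemma, Theorem \ref{t:mr}, applied to the Dirichlet de Rham sequence on $\Omega \coloneqq B(0,1)$, exploiting that \emph{both} control conditions sit on the \emph{same} field $u_n$. Concretely, I would take the short sequence $(A_0,A_1)$ with $A_0 \coloneqq \interior{\grad}$ and $A_1 \coloneqq \interior{\Curl}$, so that $H_1 = L^2(\Omega)^d$ and $A_0^* = \dive$; the sequence property $\rge(A_0)\subseteq\kar(A_1)$ holds since $\interior{\Curl}\,\interior{\grad}=0$. I would verify closedness from the Poincar\'e inequality (giving $\rge(\interior{\grad})$ closed) and from the Weck/Picard selection theorem on the Lipschitz, contractible ball (giving compactness of $\dom(\interior{\Curl})\cap\dom(\dive)\hookrightarrow L^2(\Omega)^d$, hence $\rge(\interior{\Curl})$ closed). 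In particular $(A_0,A_1)$ is even compact, so by Theorem \ref{t:tb} its cohomology $\kar(A_0^*)\cap\kar(A_1)=\kar(\dive)\cap\kar(\interior{\Curl})$ is finite-dimensional.

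Next I would convert the two $H^{-1}$-hypotheses into the abstract compactness conditions. The uniform support condition, with $\delta<1$, is used precisely here: since each $u_n$ is supported strictly inside $\Omega$, one has $u_n\in\dom(\interior{\Curl})$ with $\Curl u_n=\interior{\Curl}u_n$, and the distributional divergence coincides with $\dive u_n$, so the maximal operators may be replaced by the minimal ones of the sequence. Relative compactness of $(\dive u_n)_n$ in $H^{-1}(\Omega)=\dom(\interior{\grad})^*=\dom(A_0)^*$ is then exactly relative compactness of $(\hat{A_0^*}u_n)_n$ in $\dom(A_0)^*$. For the curl term I would invoke Proposition \ref{p:dual} and Corollary \ref{cor:AB} (with $A=\interior{\Curl}$ and a comparison operator built from $\interior{\Grad}$, using $\Curl=\Grad-(\Grad)^T$) to identify $\dom(A_1^*)^*$ with a \emph{closed} subspace of $H^{-1}(\Omega)^{d\times d}$; relative compactness of $(\Curl u_n)_n$ in $H^{-1}(\Omega)^{d\times d}$ then yields relative compactness of $(\hat{A_1}u_n)_n$ in $\dom(A_1^*)^*$.

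All three hypotheses of Theorem \ref{t:mr} now hold for the single field $u_n$, so I would apply that theorem to the pair $(u_n,u_n)$. This gives $\lim_n\langle u_n,u_n\rangle_{L^2}=\langle u,u\rangle_{L^2}$, i.e. $\|u_n\|_{L^2}\to\|u\|_{L^2}$ for $u\coloneqq\lim_n u_n$ (weak limit); together with weak convergence this forces $u_n\to u$ strongly in $L^2(\Omega)^d$. Finally, splitting $\langle u_n,v_n\rangle-\langle u,v\rangle=\langle u_n-u,v_n\rangle+\langle u,v_n-v\rangle$ and using that $(v_n)_n$ is norm-bounded while $u_n\to u$ strongly and $v_n\rightharpoonup v$, both terms vanish in the limit, which is the assertion. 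Alternatively the strong convergence of $u_n$ follows directly from Corollary \ref{cor:tb3} applied to $A_0$ and $A_1$, giving strong convergence of $\pi_{\rge(A_0)}u_n$ and $\pi_{\rge(A_1^*)}u_n$, plus convergence of the finite-dimensional component.

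The main obstacle I expect is the second step: identifying the abstract negative-order spaces $\dom(A_0)^*$ and $\dom(A_1^*)^*$ with $H^{-1}(\Omega)$ and a closed subspace of $H^{-1}(\Omega)^{d\times d}$, and establishing closedness together with finite-dimensional cohomology of the $L^2$ de Rham sequence on the ball. The support condition is the device that renders the minimal and maximal operators interchangeable, thereby reconciling the Dirichlet sequence --- needed for finite cohomology --- with the distributional $H^{-1}$-formulation of the hypotheses.
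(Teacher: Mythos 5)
Your overall architecture is the paper's: the setting \eqref{eq:setting} with $A_0=\interior{\grad}$, $A_1=\interior{\Curl}$, compactness of the sequence via a Friedrichs/Gaffney-type inequality (or Weck--Picard, which the paper explicitly mentions as an alternative) plus Rellich, and Theorem \ref{t:mr} as the engine. But the step you defer as ``the main obstacle I expect'' is precisely the heart of the paper's proof, and you do not supply it. Corollary \ref{cor:AB} converts relative compactness of $(\hat{\interior{\Curl}}\,\mcdot)_n$ in $\dom(\interior{\Curl}^*)^*$ into relative compactness in $\rge(\hat{\interior{\Curl}})$, but to transfer the $H^{-1}(B(0,1))^{d\times d}$-hypothesis into that space one needs \emph{both} that $\rge(\hat{\interior{\Curl}})$ is closed in $H^{-1}(B(0,1))^{d\times d}$ and, for that, the nontrivial potential-regularity identity \eqref{ABeq}, here $\rge(\interior{\Curl})=\{\interior{\Curl}\phi;\ \phi\in\dom(\interior{\Grad})\}$. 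The formula $\Curl\phi=\Grad\phi-(\Grad\phi)^T$ only yields the easy inclusion $\dom(\interior{\Grad})\subseteq\dom(\interior{\Curl})$; the surjectivity statement is where the paper invests Theorem \ref{t:LaplReg}, Lemma \ref{l:dens}, Theorem \ref{t:gaff}, Lemma \ref{l:toptriv} and the extension argument of Proposition \ref{prop:closedminusone} (zero-extension to $B(0,2)$, a regular potential there, correction by a gradient, Calder\'on extension, cut-back). Without this, your second step is an assertion, not a proof. A smaller but real imprecision: ``$u_n\in\dom(\interior{\Curl})$'' is literally false, since the hypotheses place $\Curl u_n$ in $H^{-1}$, not in $L^2$; the correct use of the support condition $\delta<1$ is the dual-space identity of Lemma \ref{l:distr}, namely that the minimal and maximal \emph{distributional} curls of a compactly supported $L^2$ field coincide.

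There is also a divergence in how you read the hypotheses. You take the printed statement literally (both conditions on $u_n$) and apply Theorem \ref{t:mr} to the pair $(u_n,u_n)$, obtaining $\|u_n\|_{L^2}\to\|u\|_{L^2}$, hence strong convergence of $(u_n)_n$, after which the pairing with \emph{any} weakly convergent $(v_n)_n$ converges trivially. That deduction is sound for the literal statement --- and even stronger --- but the paper's own proof invokes Lemma \ref{l:distr} for $v_n$ and applies Theorem \ref{t:mr} to the pair $(u_n,v_n)$: the ``$(\Curl u_n)_n$'' in the statement (as in Theorem \ref{t:dcl}) is evidently a typo for ``$(\Curl v_n)_n$'', in line with the Murat--Tartar formulation. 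Under the intended hypotheses (divergence control on $u_n$, curl control on $v_n$) neither sequence converges strongly in general, so your strong-convergence shortcut is unavailable and the Helmholtz-type decomposition in the proof of Theorem \ref{t:mr} --- which you mention only as an alternative --- becomes essential. So: adjust the reading of the hypotheses, and, above all, supply the closed-range/potential-regularity step in $H^{-1}$.
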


We recall here that in \cite{S16}, Theorem \ref{t:dcl2} is called ``global $\dive$-$\curl$ lemma''. We provide the connection to the classical, the ``local'' version of it, in the following remark.

\begin{remark}[$\dive$-$\curl$ lemma -- local version]\label{r:dcl12}
  We observe that the assertions in Theorem \ref{t:dcl} and in Theorem \ref{t:dcl2} are equivalent. For this, observe that Theorem \ref{t:dcl} implies Theorem \ref{t:dcl2}. Indeed, for $\Omega=B(0,1)$, the assumptions of Theorem \ref{t:dcl2} imply the same of Theorem \ref{t:dcl}. Moreover, let $\phi\in C_c^\infty(B(0,1))$ be such that $\phi=1$ on the compact set $\overline{\bigcup_{n\in\N}(\spt u_n \cup \spt v_n)}$. Then, by Theorem \ref{t:dcl} and putting $u\coloneqq \lim_{n\to\infty}u_n$ and $v\coloneqq \lim_{n\to\infty}v_n$, we obtain
  \[
    \langle u_n, v_n \rangle_{L^2}=  \int_\Omega \phi \langle u_n,v_n\rangle \to  \int_\Omega \phi \langle u,v\rangle = \langle u,v\rangle.
  \]
  On the other hand, let the assumptions of Theorem \ref{t:dcl} be satisfied. With the help of Theorem \ref{t:dcl2}, we have to prove that for all $\phi\in C_c^\infty(\Omega)$ we get
  \begin{equation}\label{e:dcl12}
   \int_\Omega \phi \langle u_n,v_n\rangle \to  \int_\Omega \phi \langle u,v\rangle.
  \end{equation}
  To do so, we let $\psi\in C_c^\infty(\Omega)$ be such that $\psi=1$ on $\spt \phi$. Then there exists $R>0$ such that $\spt \psi \subseteq B(0,R)$. By  rescaling the arguments, the statement in \eqref{e:dcl12} follows from Theorem \ref{t:dcl2}, once we proved that
  \[
     (\dive(\psi u_n))_n=(\psi\dive(u_n)+\grad(\psi )u_n)_n,\,   (\Curl(\psi v_n))_n=(2\skew((\grad \psi) v_n^T)+\psi \Curl v_n)_n
  \]
  is relatively compact in $H^{-1}(B(0,R+1))$ and $H^{-1}(B(0,R+1))^{d\times d}$. This, however, follows from the hypothesis and the compactness of the embedding $L^2(B(0,1)) \hookrightarrow H^{-1}(B(0,1))$, which in turn follows from Rellich's selection theorem.
\end{remark}

The rest of this section is devoted to prove Theorem \ref{t:dcl2} by means of Theorem \ref{t:mr}. We will apply Theorem \ref{t:mr} to the following setting
\begin{alignat}{1}\tag{$*$}\label{eq:setting}
\begin{aligned}
H_0 & = L^2(B(0,1)),
\\ H_1 & = L^2(B(0,1))^d,
\\ A_0 &\coloneqq \interior{\grad},
\\ A_1 &\coloneqq \interior{\Curl}.
\end{aligned}
\end{alignat}

\begin{proposition}\label{prop:seq} With the setting in \eqref{eq:setting}, $(A_0,A_1)$ is a sequence.
\end{proposition}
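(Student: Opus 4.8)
The plan is to verify the three requirements in the definition of a (short) sequence for the pair $(\interior{\grad},\interior{\Curl})$ in the setting \eqref{eq:setting}: that both operators are densely defined and closed, and that $\rge(\interior{\grad})\subseteq\kar(\interior{\Curl})$. The first two properties are built into the definitions. Indeed, $\interior{\grad}=\overline{\grad}_\c$ and $\interior{\Curl}=\overline{\Curl}_\c$ are by construction the closures of the operators $\grad_\c$ and $\Curl_\c$, whose domains $C_c^\infty(\Omega)$ and $C_c^\infty(\Omega)^d$ are dense in $H_0=L^2(B(0,1))$ and $H_1=L^2(B(0,1))^d$, respectively. Hence both operators are densely defined and closed, and the only substantial point is the range inclusion.

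The core computation is the classical identity $\Curl\grad=0$ on smooth functions. For $\phi\in C_c^\infty(\Omega)$ one has $\grad_\c\phi=(\partial_j\phi)_{j}$, and applying $\Curl_\c$ produces the matrix with $(j,k)$-entry $\partial_k\partial_j\phi-\partial_j\partial_k\phi$, which vanishes by the symmetry of second derivatives for smooth functions. Thus $\Curl_\c\grad_\c\phi=0$ for every $\phi\in C_c^\infty(\Omega)$, i.e.\ $\rge(\grad_\c)\subseteq\kar(\Curl_\c)$ on the cores.

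To pass from the cores to the closures I would use a closedness argument. Let $w\in\rge(\interior{\grad})$, say $w=\interior{\grad}\phi$ for some $\phi\in\dom(\interior{\grad})$. By definition of the closure there exist $\phi_n\in C_c^\infty(\Omega)$ with $\phi_n\to\phi$ in $H_0$ and $\grad_\c\phi_n\to w$ in $H_1$. Since $\grad_\c\phi_n\in C_c^\infty(\Omega)^d=\dom(\Curl_\c)\subseteq\dom(\interior{\Curl})$ and $\Curl_\c\grad_\c\phi_n=0$ by the previous step, the pairs $(\grad_\c\phi_n,0)$ lie in the graph of $\interior{\Curl}$ and converge to $(w,0)$. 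Closedness of $\interior{\Curl}$ then yields $w\in\dom(\interior{\Curl})$ and $\interior{\Curl}w=0$, that is, $w\in\kar(\interior{\Curl})$. This gives $\rge(\interior{\grad})\subseteq\kar(\interior{\Curl})$ and completes the verification.

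The computation itself is entirely routine; the only step requiring a little care is the last one, the passage to the closures. Here one must invoke the closedness of $\interior{\Curl}$ rather than attempting to apply $\Curl$ and $\grad$ successively on $\dom(\interior{\grad})$ directly, since a general element of that domain need not be smooth and the pointwise identity $\Curl\grad=0$ is not available for it a priori.
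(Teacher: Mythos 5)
Your proof is correct and takes essentially the same route as the paper: both rest on the Schwarz-lemma computation $\Curl_\c\grad_\c\phi=(\partial_k\partial_j\phi-\partial_j\partial_k\phi)_{j,k}=0$ for $\phi\in C_c^\infty(B(0,1))$. The only difference is that you make explicit the passage from the cores to the closures via the closedness of $\interior{\Curl}$, a routine graph-convergence step that the paper compresses into the single conclusion $\interior{\Curl}\interior{\grad}\subseteq 0$.
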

\begin{proof}
 By Schwarz's lemma, it follows for all $\phi\in C_c^\infty(B(0,1))$ that
\[
   \interior{\Curl}\interior{\grad}\phi=\interior{\Curl}(\partial_j\phi)_{j\in\{1,\ldots,d\}}=(\partial_k\partial_j\phi-\partial_j\partial_k\phi)_{j,k\in\{1,\ldots,d\}}=0.
\]
Thus, $\interior{\Curl}\interior{\grad}\subseteq 0$. 
\end{proof}
Next, we address the compactness property. 
\begin{theorem}\label{t:comp}
 With the setting in \eqref{eq:setting}, $(A_0,A_1)$ is compact.
\end{theorem}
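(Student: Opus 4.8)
The plan is to recognise the claim as the \emph{(generalised) Maxwell compactness property} on the ball and to prove it by a Helmholtz decomposition combined with elliptic regularity and Rellich's selection theorem. First I would pin down the space at stake. Since $A_0=\interior{\grad}$ we have $A_0^*=\interior{\grad}^*=-\dive$, so $\dom(A_0^*)=\dom(\dive)=\{u\in L^2(B(0,1))^d; \dive u\in L^2(B(0,1))\}$ carries \emph{no} boundary condition, while $\dom(A_1)=\dom(\interior{\Curl})$ is the closure of $C_c^\infty(B(0,1))^d$ in the $\Curl$-graph norm, i.e.\ the fields with $L^2$ matrix-curl and vanishing tangential trace. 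Hence ``$(A_0,A_1)$ compact'' is precisely the statement that $\dom(\interior{\Curl})\cap\dom(\dive)\hookrightarrow L^2(B(0,1))^d$ is compact, which is the Weck selection property in $d$ dimensions for the electric boundary condition.

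To prove it, let $(u_n)_n$ be bounded in the graph norm, i.e.\ $u_n$, $\dive u_n$ and $\interior{\Curl}u_n$ bounded in $L^2$; I must extract an $L^2$-convergent subsequence. For each $n$ let $p_n\in H_0^1(B(0,1))$ solve the Dirichlet problem $\Delta p_n=\dive u_n$. Because $B(0,1)$ is smooth, elliptic regularity bounds $p_n$ in $H^2$, so $\grad p_n$ is bounded in $H^1$ and, by Rellich, relatively compact in $L^2$. Put $w_n\coloneqq u_n-\grad p_n$, so that $\dive w_n=0$. Since $p_n\in H_0^1(B(0,1))=\dom(A_0)$, the field $\grad p_n=A_0 p_n$ lies in $\kar(A_1)$ by Proposition \ref{prop:seq} and has vanishing tangential trace; therefore $w_n\in\dom(\interior{\Curl})$ with $\interior{\Curl}w_n=\interior{\Curl}u_n$ still bounded. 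It remains to show that the solenoidal part $(w_n)_n$ is relatively compact in $L^2$.

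For the solenoidal part I would start from the Gaffney identity $\norm{\Grad w}_{L^2}^2=\norm{\dive w}_{L^2}^2+\tfrac12\norm{\interior{\Curl}w}_{L^2}^2$, which holds on $C_c^\infty(B(0,1))^d$ by an elementary integration by parts, and try to upgrade it to an $H^1$-bound $\norm{w_n}_{H^1}\lesssim\norm{w_n}_{L^2}+\norm{\interior{\Curl}w_n}_{L^2}$ for divergence-free fields with vanishing tangential trace and free normal trace. Once this is available, Rellich gives relative compactness of $(w_n)_n$ in $L^2$, and combining with the gradient part shows $(u_n)_n$ is relatively compact, which is the assertion.

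The main obstacle is exactly this regularity step. The difficulty is that $C_c^\infty(B(0,1))^d$ is \emph{not} dense in $\dom(\interior{\Curl})\cap\dom(\dive)$ in the graph norm, since the normal trace remains uncontrolled; thus the Gaffney identity cannot be passed to the limit naively, genuine boundary terms appear, and the desired $H^1$-estimate is false on general Lipschitz domains. What rescues the argument here is that $B(0,1)$ is smooth and \emph{convex}: on such domains the curvature boundary term in the Gaffney identity has a favourable sign, so fields in $\dom(\interior{\Curl})\cap\dom(\dive)$ are in fact $H^1$-regular. Alternatively, and avoiding regularity altogether, one may simply invoke Weck's selection theorem, or Picard's compact-embedding theorem in the differential-forms formulation, directly for the ball (\cite{W74,P84,BPS16}), which yields the required compactness of $\dom(\interior{\Curl})\cap\dom(\dive)\hookrightarrow L^2(B(0,1))^d$.
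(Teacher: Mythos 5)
Your proposal is correct, and its load-bearing skeleton --- a Friedrichs/Gaffney inequality on the convex ball, upgraded to compactness via Rellich, with Weck's or Picard's selection theorem as a citable shortcut --- is exactly the paper's: the paper itself remarks that \cite{W74,P84} would settle Theorem \ref{t:comp} before opting for the classical path via Theorem \ref{t:gaff}. Two points of comparison. First, your Helmholtz-type splitting $u_n=\grad p_n+w_n$ is sound (your verification that $\grad p_n\in\dom(\interior{\Curl})$ with $\interior{\Curl}\grad p_n=0$ via Proposition \ref{prop:seq} and closedness of $\interior{\Curl}$ is the right argument), but it is superfluous: the paper proves $\norm{\Grad u}^2\leq\frac12\norm{\interior{\Curl}u}^2+\norm{\dive u}^2$ for \emph{all} $u\in\dom(\interior{\Curl})\cap\dom(\dive)$ and applies it directly to $u_n$, and reducing to divergence-free fields does not ease the boundary obstruction you correctly identify. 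Second, and more substantively, you close the density gap by appealing to a favourable sign of a curvature boundary term, which presupposes enough boundary regularity to justify the trace computation; the paper instead avoids boundary integrals altogether. Its Lemma \ref{l:dens} shows that the set $V=\{\psi+\interior{\grad}(-\Delta+1)^{-1}\dive\psi;\ \psi\in C_c^\infty(B(0,1))^d\}$ is dense in $\dom(\interior{\Curl})\cap\dom(\dive)$ for the graph norm, and on $V$ the inequality follows from integration by parts together with the Kadlec--Grisvard convexity estimate $\norm{\Grad\interior{\grad}w}\leq\norm{\Delta w}$ of Theorem \ref{t:LaplReg}; this works on any bounded convex domain, smooth or not. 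Since your fallback of invoking \cite{W74,P84,BPS16} outright is legitimate and immediately yields the claim, there is no genuine gap --- but as a self-contained argument your regularity step would need either the $V$-density device or an honest boundary-term analysis to be complete.
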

For the proof of Theorem \ref{t:comp}, we could use compactness embedding theorems such as Weck's selection theorem (\cite{W74}) or Picard's selection theorem (\cite{P84}). However, due to the simple geometric setting discussed here, it suffices to walk along the classical path of showing compactness by proving Gaffney's inequality and then using Rellich's selection theorem. We emphasise, however, that meanwhile there have been developed sophisticated tools detouring Gaffney's inequality, to obtain compactness results for very irregular $\Omega$, which do not satisfy Gaffney's inequality.  For convenience of the reader, we shall provide a proof of Theorem \ref{t:comp} using the following regularity result for the Laplace operator, see \cite[Teorema 10 and 14]{K64} or since we use the respective result for a $d$-dimensional ball, only, see \cite[Inequality (3,1,1,2)]{G11}. For this, we denote the Dirichlet Laplace operator by $\Delta\coloneqq \dive\interior{\grad}$.
\begin{theorem}\label{t:LaplReg}Let $\Omega\subseteq \mathbb{R}^d$ open, bounded and convex. Then for all $u\in \dom(\Delta)$, we have $u\in \dom(\Grad\interior{\grad})$ and
\[
    \|\Grad\interior{\grad}u\|_{L^2(\Omega)^{d\times d}}\leq \|\Delta u\|_{L^2(\Omega)}.
\] 
\end{theorem}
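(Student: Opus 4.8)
The plan is to prove the inequality

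\[
    \|\Grad\interior{\grad}u\|_{L^2(\Omega)^{d\times d}}\leq \|\Delta u\|_{L^2(\Omega)}
\]

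for an \emph{arbitrary} bounded open convex $\Omega\subseteq\R^d$ by first establishing it on smooth convex domains via an integration-by-parts (Bochner--Reilly type) identity, and then transferring to general convex $\Omega$ by an approximation from the inside by smooth convex sets. The key algebraic fact is the pointwise Bochner identity: for $u\in C^\infty(\overline\Omega)$ one has, writing $\partial_{jk}u$ for the entries of the Hessian,
\[
   (\Delta u)^2 = \Big(\sum_{j}\partial_{jj}u\Big)^2,\qquad \abs{\Grad\interior{\grad}u}^2=\sum_{j,k}(\partial_{jk}u)^2,
\]
and the divergence-structure identity
\[
   \sum_{j,k}(\partial_{jk}u)^2 = (\Delta u)^2 + \dive\big(\Grad\interior{\grad}u\,\grad u - \grad u\,\Delta u\big).
\]
Integrating this over $\Omega$ converts the gap between the two sides of the asserted inequality into a pure boundary integral, so that the whole inequality reduces to the sign of one boundary term.

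First I would reduce to functions in $\dom(\Delta)$ that are smooth up to the boundary. Since $\Delta=\dive\interior{\grad}$ is selfadjoint and $C^\infty_c(\Omega)$ is a core in the graph norm, and since $\dom(\Delta)=H^2\cap H^1_0$ on a sufficiently regular convex domain, it suffices by density to prove the estimate for $u\in C^\infty(\overline\Omega)$ with $u|_{\partial\Omega}=0$; the general $u\in\dom(\Delta)$ then follows because both sides are continuous in the graph norm of $\Delta$ and membership $u\in\dom(\Grad\interior{\grad})$ is obtained in the limit. Next, on a \emph{smooth} convex domain I would integrate the Bochner identity above. The boundary integral that survives, after using $u|_{\partial\Omega}=0$ so that $\grad u=(\partial_\nu u)\,\nu$ on $\partial\Omega$ with $\nu$ the outer unit normal, reduces to
\[
   -\int_{\partial\Omega} (\partial_\nu u)^2\, \mathrm{II}(\nu,\nu)\,\rmd\sigma,
\]
where $\mathrm{II}$ denotes the second fundamental form of $\partial\Omega$; this is the standard Reilly formula. \emph{Convexity} is exactly the statement $\mathrm{II}\geq 0$, hence this boundary term is nonpositive, which yields $\int_\Omega\abs{\Grad\interior{\grad}u}^2\leq\int_\Omega(\Delta u)^2$, i.e. the claimed inequality on smooth convex $\Omega$.

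The remaining, and genuinely new, step relative to the smooth-ball case is the passage to an \emph{arbitrary} bounded convex open set. Here I would exhaust $\Omega$ from inside by a sequence of smooth bounded convex domains $\Omega_m\uparrow\Omega$ (for instance sublevel sets of a smooth strictly convex mollification of the convex gauge/support function, or $\Omega_m=\{x:\dist(x,\Omega^c)>1/m\}$ suitably smoothed, each of which is convex because $\Omega$ is). For fixed $u\in\dom(\Delta)$ on $\Omega$ one restricts to $\Omega_m$ and applies the smooth case; the subtlety is that $u$ need not vanish on $\partial\Omega_m$, so instead I would argue at the level of the variational inequality: approximate $u$ in the $H^2$-graph norm by $u_\ell\in C^\infty_c(\Omega)$, apply the smooth-convex estimate on $\Omega_m$ to $u_\ell$ (which is compactly supported, so all boundary terms vanish and the inequality holds with equality replaced by $\leq$ using $\mathrm{II}\geq0$ on $\partial\Omega_m$), and then let $\ell\to\infty$ and $m\to\infty$, using monotone convergence on the left and dominated convergence on the right. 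The main obstacle I anticipate is precisely the regularity/approximation bookkeeping for nonsmooth $\Omega$: establishing that $\dom(\Delta)\subseteq\dom(\Grad\interior{\grad})$ with the $H^2$-bound in the limit requires knowing a priori that the approximating quantities $\Grad\interior{\grad}u_\ell$ are bounded in $L^2$ uniformly in $\ell$ and $m$, which is supplied by the inequality itself on the smooth $\Omega_m$ together with the uniform nonnegativity of the second fundamental form along the exhaustion. Convexity of $\Omega$ guarantees one can choose the exhausting $\Omega_m$ convex, so the good sign of the boundary term is preserved uniformly, and this is what lets the estimate survive the limit without any constant deterioration.
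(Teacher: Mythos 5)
Your smooth-domain argument is essentially correct and is indeed the standard Reilly/Bochner route, up to one local slip: the surviving boundary density is not $(\partial_\nu u)^2\,\mathrm{II}(\nu,\nu)$ --- the second fundamental form acts on \emph{tangent} vectors, so $\mathrm{II}(\nu,\nu)$ is not defined. With $u=0$ on $\partial\Omega$ one has $\grad u=(\partial_\nu u)\nu$ and $\Delta u=\partial_\nu^2 u+(\tr \mathrm{II})\,\partial_\nu u$ on the boundary, so integrating your (correct) divergence identity gives
\[
  \int_\Omega \abs{\Grad\interior{\grad}u}^2
  = \int_\Omega (\Delta u)^2 - \int_{\partial\Omega} (\tr \mathrm{II})\,(\partial_\nu u)^2\,\rmd\sigma,
\]
and convexity yields $\tr\mathrm{II}\geq 0$; the sign conclusion is as you say. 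Note also that the paper itself gives no proof of this theorem --- it cites Kadlec and Grisvard --- and since it only ever applies the result on the ball $B(0,1)$, your smooth-domain half already covers everything the paper actually uses.

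The passage to a general bounded convex $\Omega$, however, contains a genuine gap. Your reduction rests on the claim that $C_c^\infty(\Omega)$ is a core for the Dirichlet Laplacian, and it is not: for $\phi\in C_c^\infty(\Omega)$ integration by parts gives the exact identity $\|\Delta\phi\|_{L^2}=\|\Grad\interior{\grad}\phi\|_{L^2}$, so the graph-norm closure of $\Delta|_{C_c^\infty(\Omega)}$ has domain $H^2_0(\Omega)$, which is a \emph{proper} closed subspace of $\dom(\Delta)$ (any Dirichlet eigenfunction lies in $\dom(\Delta)\setminus H^2_0(\Omega)$, since vanishing Cauchy data would force it to vanish by unique continuation); approximating instead ``in the $H^2$-graph norm'' presupposes $u\in H^2(\Omega)$, which is the conclusion being proved. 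There is a clean internal symptom of the failure: in your final scheme, once $\spt u_\ell\subseteq\Omega_m$ all boundary terms on $\partial\Omega_m$ vanish identically, so convexity is used only vacuously, and if the limit $\ell\to\infty$ were justified the same argument would prove $\dom(\Delta)\subseteq H^2$ with the same bound for \emph{every} bounded open $\Omega$ --- false, e.g., on an L-shaped domain, where $u\sim r^{2/3}\sin(2\theta/3)$ near the reentrant corner gives $u\in\dom(\Delta)\setminus H^2$. The standard repair (Grisvard's proof, and in spirit Kadlec's) is to fix $f\coloneqq\Delta u$, solve the Dirichlet problem $\Delta u_m=f|_{\Omega_m}$ on each smooth convex exhausting domain $\Omega_m$, apply your smooth-case inequality to get $\|\Grad\interior{\grad}u_m\|_{L^2(\Omega_m)}\leq\|f\|_{L^2(\Omega)}$ uniformly in $m$, extract a weak $H^2_{\mathrm{loc}}$ limit, and identify it with $u$ by stability of the Dirichlet problem under the exhaustion (uniform Poincar\'e constants, convergence in $H^1_0$). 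There convexity enters through the uniform bound on each $\Omega_m$, not through boundary terms that have already been killed by compact support.
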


Based on the latter estimate, we shall prove Friedrich's inequality. For the proof of which, we will follow the exposition of \cite{S82}. Since the exposition in \cite{S82} is restricted to 2 or 3 spatial dimensions, only, we provide a proof for the ``multi-$d$''-case in the following.
\begin{theorem}[{{\cite[Theorem 2.2]{S82}}}]\label{t:gaff} Let $\Omega\subseteq \mathbb{R}^d$ open, bounded, convex. Then $\dom(\interior{\Curl})\cap\dom(\dive)\hookrightarrow \dom(\Grad)$. Moreover, we have 
\[
   \|\Grad u\|_{L^2(\Omega)^d}^2\leq \frac12\|\interior{\Curl} u\|^2_{L^2(\Omega)^{d\times d}}+\|\dive u\|^2_{L^2(\Omega)} 
\] 
for all $u\in \dom(\interior{\Curl})\cap\dom(\dive)$.
\end{theorem}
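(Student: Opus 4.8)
The plan is to prove the identity and inequality first for smooth compactly supported vector fields $u \in C_c^\infty(\Omega)^d$ and then pass to the closure, since $\dom(\interior{\Curl})$ is by definition the closure of $\Curl_\text{c}$ and one can arrange a common core. The heart of the matter is a pointwise/integration-by-parts identity that relates $\int_\Omega |\Grad u|^2$ to $\int_\Omega |\dive u|^2$ and $\int_\Omega |\interior{\Curl} u|^2$ up to a boundary term, and the convexity of $\Omega$ is exactly what forces that boundary term to have the right sign.

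\textbf{Step 1 (the algebraic identity).} First I would record the pointwise relation between the full gradient, the divergence, and the curl. Writing $\Grad u = (\partial_k u_j)_{j,k}$, its symmetric and skew parts split the Frobenius norm as $|\Grad u|^2 = |\sym \Grad u|^2 + |\skew \Grad u|^2$, and since $\interior{\Curl} u = \Grad u - (\Grad u)^T = 2\,\skew \Grad u$ one has $|\skew \Grad u|^2 = \tfrac14 |\interior{\Curl} u|^2$. Thus the genuine content is an estimate on the symmetric part, and the trace $\tr(\Grad u) = \dive u$ accounts for one scalar mode of it.

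\textbf{Step 2 (integration by parts).} For $u \in C_c^\infty(\Omega)^d$ I would integrate by parts twice to obtain
\[
   \int_\Omega |\Grad u|^2 = \int_\Omega |\dive u|^2 + \int_\Omega \sum_{j,k} \bigl(\partial_k u_j \,\partial_j u_k - \partial_j u_j \,\partial_k u_k\bigr),
\]
where on a compactly supported field inside $\Omega$ the crossed second-derivative terms cancel and the displayed correction integrates to zero, giving the clean interior identity $\int |\Grad u|^2 = \int |\dive u|^2 + \tfrac12 \int |\interior{\Curl} u|^2$ (using Step 1). The delicate point is that elements of $\dom(\interior{\Curl}) \cap \dom(\dive)$ need not be compactly supported; they satisfy the Dirichlet-type boundary condition built into $\interior{\Curl}$ but only a \emph{natural} (Neumann-type) condition for $\dive$, so I cannot simply approximate by $C_c^\infty$. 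This is where I would invoke the elliptic regularity input: using Theorem \ref{t:LaplReg}, for $u$ in the relevant domain I control $\|\Grad \interior{\grad} w\|$ by $\|\Delta w\|$ on the convex $\Omega$, and the boundary integrand arising from the integration by parts is governed by the second fundamental form of $\partial\Omega$, which is nonnegative precisely because $\Omega$ is convex, so that boundary term has a favourable sign.

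\textbf{Step 3 (closure and the embedding claim).} Having the estimate $\|\Grad u\|_{L^2}^2 \le \tfrac12 \|\interior{\Curl} u\|_{L^2}^2 + \|\dive u\|_{L^2}^2$ on a dense core, I would close it: the inequality shows that on $\dom(\interior{\Curl}) \cap \dom(\dive)$ the graph norm controls $\|\Grad u\|_{L^2}$, and together with the trivial bound $\|u\|_{L^2} \le \|u\|_{\dom(\interior{\Curl}) \cap \dom(\dive)}$ this yields $\dom(\interior{\Curl}) \cap \dom(\dive) \hookrightarrow \dom(\Grad)$ continuously, with the stated constants. The main obstacle throughout is Step 2, namely justifying the integration by parts for non-smooth fields and exhibiting the boundary term with the correct sign; this is exactly the technical content that convexity (via Theorem \ref{t:LaplReg}) is designed to supply, and following \cite{S82} I would reduce the general field to a gradient plus a solenoidal part so that the Laplace regularity estimate applies directly to the scalar potential.
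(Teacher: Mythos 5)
Your plan correctly identifies the crux --- elements of $\dom(\interior{\Curl})\cap\dom(\dive)$ cannot be approximated by $C_c^\infty(\Omega)^d$ fields, so the clean interior identity does not transfer by naive closure --- but it then leaves exactly that point unresolved. Saying ``one can arrange a common core'' and, later, ``reduce the general field to a gradient plus a solenoidal part'' is the \emph{statement} of the missing lemma, not its proof. The paper (following Saranen) makes this precise: the set
\[
   V \coloneqq \{\psi + \interior{\grad}(-\Delta+1)^{-1}\dive\psi \,;\, \psi\in C_c^\infty(\Omega)^d\}
\]
is dense in $X\coloneqq\dom(\interior{\Curl})\cap\dom(\dive)$ for the graph norm (Lemma \ref{l:dens}), proved by a variational orthogonality argument: if $u\perp_X V$, the resolvent equation for $w=(-\Delta+1)^{-1}\dive\psi$ makes the $\dive$-terms cancel, leaving $(\interior{\Curl}^*\interior{\Curl}+1)u=0$, hence $u=0$. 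Note also that this decomposition is \emph{not} ``gradient plus solenoidal'': $\psi$ is compactly supported but not divergence-free (indeed $\dive u = w$ for $u\in V$). Without this density lemma, or some substitute, your Step 3 has nothing to close from, because your Step 2 identity is established only on $C_c^\infty(\Omega)^d$, which is not dense in $X$.

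A second, related problem is your fallback in Step 2: the claim that ``the boundary integrand \dots is governed by the second fundamental form of $\partial\Omega$, which is nonnegative because $\Omega$ is convex'' is the classical argument for $C^2$ domains and is not available here. A bounded convex $\Omega$ is merely Lipschitz, the second fundamental form need not exist, and no trace-level integration by parts is justified for general $u\in X$; one would need an additional approximation of $\Omega$ by smooth convex domains to rescue this route. The paper deliberately avoids all boundary terms: convexity enters \emph{solely} through the Kadlec--Grisvard regularity estimate $\|\Grad\interior{\grad}w\|\leq\|\Delta w\|$ of Theorem \ref{t:LaplReg}, applied to the scalar potential $w$ of the decomposition above. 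For $u=\psi+\interior{\grad}w\in V$ the remaining terms are handled by interior integration by parts on the compactly supported $\psi$, the resolvent identity $\langle\interior{\grad}w,\interior{\grad}\dive\psi\rangle+\langle w,\dive\psi\rangle=\langle\dive\psi,\dive\psi\rangle$, and a cancellation of the cross terms which is what produces exactly the constants $\tfrac12$ and $1$. So your skeleton (dense core, Laplace regularity on convex domains, closure) matches the paper's, but the two load-bearing steps --- the density of the correct core and the replacement of the boundary-sign argument by the regularity estimate --- are missing or mis-aimed as written.
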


\begin{lemma}[{{\cite[Lemma 2.1]{S82}}}]\label{l:dens} Let $\Omega\subseteq \mathbb{R}^d$ open, bounded. Denote
\[
   V\coloneqq \{ \phi; \exists \psi\in C_c^\infty(\Omega)^d:\, \phi=\psi+\interior{\grad}(-\Delta+1)^{-1}\dive \psi \}.
\]
Then $V$ is dense in $\dom(\interior{\Curl})\cap \dom(\dive)$.
\end{lemma}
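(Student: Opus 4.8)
The plan is to establish density via orthogonal complements: writing $X\coloneqq\dom(\interior{\Curl})\cap\dom(\dive)$ for the Hilbert space carrying the graph inner product $\langle u,v\rangle\coloneqq\langle u,v\rangle_{L^2}+\langle\interior{\Curl}u,\interior{\Curl}v\rangle_{L^2}+\langle\dive u,\dive v\rangle_{L^2}$, I would show that the only $w\in X$ orthogonal to all of $V$ is $w=0$. First, though, I would confirm $V\subseteq X$. For $\psi\in C_c^\infty(\Omega)^d$ we have $\dive\psi\in C_c^\infty(\Omega)$, and since $-\Delta+1=1-\interior{\grad}^*\interior{\grad}$ is boundedly invertible on $L^2(\Omega)$, the function $g\coloneqq(-\Delta+1)^{-1}\dive\psi$ lies in $\dom(\Delta)\subseteq\dom(\interior{\grad})$. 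By the sequence property (Proposition \ref{prop:seq}) $\interior{\grad}g\in\rge(\interior{\grad})\subseteq\kar(\interior{\Curl})\subseteq\dom(\interior{\Curl})$, while $\interior{\grad}g\in\dom(\dive)$ with $\dive\interior{\grad}g=\Delta g$; hence $\phi=\psi+\interior{\grad}g\in X$.

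The heart of the argument is the expansion of the orthogonality relation, which I would drive by three identities tailored to the definition of $V$. Since $\interior{\Curl}\interior{\grad}g=0$, one has $\interior{\Curl}\phi=\interior{\Curl}\psi$. Using $\Delta(-\Delta+1)^{-1}=-1+(-\Delta+1)^{-1}$ gives $\dive\phi=\dive\psi+\Delta g=g$. Finally, because $\dive=-\interior{\grad}^*$ and $w\in\dom(\dive)$, we get $\langle\interior{\grad}g,w\rangle_{L^2}=-\langle g,\dive w\rangle_{L^2}$. Substituting these into $\langle\phi,w\rangle$ yields
\[
   \langle\phi,w\rangle=\langle\psi,w\rangle_{L^2}-\langle g,\dive w\rangle_{L^2}+\langle\interior{\Curl}\psi,\interior{\Curl}w\rangle_{L^2}+\langle g,\dive w\rangle_{L^2},
\]
so the two divergence contributions cancel and $w\perp V$ reduces to $\langle\psi,w\rangle_{L^2}+\langle\interior{\Curl}\psi,\interior{\Curl}w\rangle_{L^2}=0$ for all $\psi\in C_c^\infty(\Omega)^d$.

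To conclude, I would read the last identity as an adjoint statement: since $\interior{\Curl}$ is the closure of $\Curl_{\mathrm c}$ and $C_c^\infty(\Omega)^d$ is a core, it says precisely that $\interior{\Curl}w\in\dom(\interior{\Curl}^*)$ with $\interior{\Curl}^*\interior{\Curl}w=-w$. Pairing with $w$ then gives $\|w\|_{L^2}^2+\|\interior{\Curl}w\|_{L^2}^2=\langle w+\interior{\Curl}^*\interior{\Curl}w,w\rangle_{L^2}=0$, whence $w=0$ and $V$ is dense. The genuine obstacle here is conceptual rather than computational: one has to recognise that the correction term $\interior{\grad}(-\Delta+1)^{-1}\dive\psi$ is constructed exactly so that the divergence terms annihilate each other, which turns the density problem into the coercivity of $1+\interior{\Curl}^*\interior{\Curl}$. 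The remaining points are routine, the only one demanding care being the passage from testing against $C_c^\infty(\Omega)^d$ to membership in $\dom(\interior{\Curl}^*)$, legitimised by the graph-norm density of $C_c^\infty(\Omega)^d$ in $\dom(\interior{\Curl})$.
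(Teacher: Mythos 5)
Your proof is correct and takes essentially the same route as the paper: density via the orthogonal complement in $X=\dom(\interior{\Curl})\cap\dom(\dive)$ with the graph inner product, cancellation of the divergence contributions built into the corrector $\interior{\grad}(-\Delta+1)^{-1}\dive\psi$, and reduction to $(1+\interior{\Curl}^*\interior{\Curl})w=0$ (your spelled-out core argument for $\interior{\Curl}w\in\dom(\interior{\Curl}^*)$ is a welcome elaboration of the paper's terser conclusion). One small sign slip: since $\Delta=\dive\interior{\grad}=-\interior{\grad}^*\interior{\grad}$, you should have $-\Delta+1=1+\interior{\grad}^*\interior{\grad}$ rather than $1-\interior{\grad}^*\interior{\grad}$; the corrected identity is precisely what makes the bounded invertibility on $L^2(\Omega)$ immediate.
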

\begin{proof}
 First of all note that $V\subseteq X\coloneqq \dom(\interior{\Curl})\cap \dom(\dive)$. Indeed, for $\phi=\psi+\interior{\grad}(-\Delta+1)^{-1}\dive \psi$ for some $\psi\in C_c^\infty(\Omega)$, we get $\interior{\Curl}\phi=\interior{\Curl} \psi\in L^2(\Omega)^{d\times d}$, by Proposition \ref{prop:seq}. Moreover, $\dive \phi =  (-\Delta+1)^{-1}\dive \psi\in L^2(\Omega)$. Thus, $V\subseteq X$. Next, we show the density property. For this, we endow $X$ with the scalar product
 \[
    \langle u,v\rangle_X\coloneqq \langle \interior{\Curl} u,\interior{\Curl}v\rangle+\langle \dive u,\dive v\rangle + \langle u,v\rangle.
 \]
 Let $u\in V^{\bot_X}\subseteq X$. We need to show that $u=0$. For all $\psi\in C_c^\infty(\Omega)$ and $w\coloneqq (-\Delta+1)^{-1}\dive \psi$ we have
 \begin{align*}
    0 & = \langle u,\psi + \interior{\grad}w\rangle_X
    \\& = \langle \interior{\Curl} u, \interior{\Curl} \psi\rangle + \langle \dive u, \dive \psi\rangle  + \langle \dive u,\dive\interior{\grad}w\rangle + \langle u,\psi\rangle + \langle u,\interior{\grad}w\rangle
    \\& = \langle \interior{\Curl} u, \interior{\Curl} \psi\rangle + \langle \dive u, \dive \psi\rangle  + \langle \dive u,\Delta w\rangle + \langle u,\psi\rangle - \langle \dive u, w\rangle
    \\& = \langle \interior{\Curl} u, \interior{\Curl} \psi\rangle  + \langle u,\psi\rangle.
 \end{align*}
Thus, $(\interior{\Curl}^*\interior{\Curl} + 1)u=0$, which yields $u=0$.
\end{proof}
Before we come to the proof of Theorem \ref{t:gaff}, we mention an elementary formula to be used in the forthcoming proof: For all $\psi\in C_c^\infty(\Omega)^d$ we have
\[
  -\Delta I_{d\times d}  \psi = - \Dive\Grad \psi = -\Dive \Curl\psi - \grad \dive \psi.
\]
\begin{proof}[Proof of Theorem \ref{t:gaff}] By Lemma \ref{l:dens} it suffices to show the inequality for $u\in V$. For this, let $\psi\in C_c^\infty(\Omega)^d$ and put $u\coloneqq \psi + \interior{\grad}w$ with $w\coloneqq (-\Delta+1)^{-1}\dive\psi$. 
We compute
\begin{multline*}
  \|\Grad u\|^2=\|\Grad (\psi+\interior{\grad}w)\|^2\\= \langle\Grad \psi,\Grad \psi\rangle + 2\Re \langle \Grad \psi,\Grad \interior{\grad}w\rangle + \|\Grad \interior{\grad}w\|^2.
\end{multline*}
We aim to discuss every term in the latter expression separately. 
We have
\begin{align*}
  \langle\Grad \psi,\Grad \psi\rangle & = - \langle\Dive\Grad \psi, \psi\rangle
  \\  & = -\langle\Dive \Curl\psi,\psi\rangle - \langle\grad \dive \psi,\psi\rangle
  \\  & = -\langle\Dive\skew\Curl\psi,\psi\rangle + \langle \dive \psi,\dive\psi\rangle
  \\  & = \frac12\langle\Curl\psi,\Curl\psi\rangle + \langle \dive \psi,\dive\psi\rangle.
\end{align*}
Next,
\begin{align*}
 \langle \Grad \psi,\Grad \interior{\grad}w\rangle & = -\langle\Dive \Grad \psi,\interior{\grad}w\rangle
 \\ & = - \langle\Dive \Curl \psi,\interior{\grad}w\rangle-\langle\grad \dive \psi,\interior{\grad}w\rangle
 \\ & =  \langle\dive\Dive \Curl \psi,w\rangle-\langle\grad \dive \psi,\interior{\grad}w\rangle
 \\ & = -\langle\grad \dive \psi,\interior{\grad}w\rangle.
\end{align*}
By Theorem \ref{t:LaplReg}, we estimate
\[
 \|\Grad \interior{\grad}w\|^2\leq \|\Delta w\|^2=\|w-\dive\psi\|^2=\|w\|^2-2\Re\langle w,\dive\psi\rangle+\|\dive\psi\|^2.
\]
Note that since $\dive\psi\in C_c^\infty(\Omega)$, we obtain from $w=(-\Delta+1)^{-1}\dive\psi$ that
\[
  \langle \interior{\grad}w,\interior{\grad}\dive\psi\rangle + \langle w,\dive\psi\rangle = \langle \dive \psi,\dive\psi\rangle.
\]
Thus, all together, 
\begin{align*}
  \|\Grad u\|^2 & \leq \frac12\langle\Curl\psi,\Curl\psi\rangle + \langle \dive \psi,\dive\psi\rangle - 2\Re \langle\grad \dive \psi,\interior{\grad}w\rangle \\ & \quad+ \|w\|^2-2\Re\langle w,\dive\psi\rangle+\|\dive\psi\|^2
  \\ &= \frac12\langle\Curl\psi,\Curl\psi\rangle + \langle \dive \psi,\dive\psi\rangle \\ &\quad+ 2\Re\langle w,\dive\psi\rangle - 2 \langle \dive \psi,\dive\psi\rangle + \|w\|^2-2\Re\langle w,\dive\psi\rangle+\|\dive\psi\|^2
  \\ &= \frac12\langle\Curl\psi,\Curl\psi\rangle + \|w\|^2
  \\ &= \frac12\|\Curl u\|^2 + \|\dive u\|^2.\qedhere
\end{align*} 
\end{proof}

\begin{proof}[Proof of Theorem \ref{t:comp}] By Theorem \ref{t:gaff} as $B(0,1)$ is convex, we obtain that \[
\dom(A_1)\cap\dom(A_0^*)=\dom(\interior{\Curl})\cap \dom(\dive)\hookrightarrow \dom(\Grad).                                                                                    
                                                                                    \]
On the other hand $\dom(\Grad)\hookrightarrow L^2(B(0,1))^d$ is compact by Rellich's selection theorem. This yields the assertion.
\end{proof}

\begin{lemma}\label{l:toptriv} Assume the setting in \eqref{eq:setting}. Then $\kar(\dive)\cap\kar(\interior{\Curl})=\{0\}$.  
\end{lemma}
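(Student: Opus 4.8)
The statement asserts triviality of the space of Dirichlet harmonic fields $\kar(A_0^*)\cap\kar(A_1)=\kar(\dive)\cap\kar(\interior{\Curl})$, which morally expresses the topological triviality of the ball. The plan is to combine Friedrich's inequality (Theorem~\ref{t:gaff}) -- which forces such a field to be constant -- with the hidden Dirichlet boundary condition carried by the closure $\interior{\Curl}$, which forces that constant to vanish.

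First I would take $u\in\kar(\dive)\cap\kar(\interior{\Curl})$; then $u\in\dom(\dive)\cap\dom(\interior{\Curl})$ with $\dive u=0$ and $\interior{\Curl}u=0$. As $B(0,1)$ is open, bounded and convex, Theorem~\ref{t:gaff} applies and yields $u\in\dom(\Grad)$ together with
\[
  \norm{\Grad u}_{L^2}^2\leq \tfrac12\norm{\interior{\Curl}u}_{L^2}^2+\norm{\dive u}_{L^2}^2=0.
\]
Hence $\Grad u=0$, and since $B(0,1)$ is connected this means $u$ is a constant vector field, say $u\equiv c\in\C^d$.

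It remains to prove $c=0$, and this is the only substantial point, since Gaffney gives constancy but not vanishing: the argument must genuinely exploit that membership in $\dom(\interior{\Curl})$ encodes a vanishing tangential trace on $\partial B(0,1)$. To avoid trace theory I would test directly. By definition of $\interior{\Curl}$ as the closure of $\Curl$ on $C_c^\infty(B(0,1))^d$, choose $\phi_n\in C_c^\infty(B(0,1))^d$ with $\phi_n\to c$ in $L^2$ and $\Curl\phi_n\to\interior{\Curl}u=0$ in $L^2$. Pairing with the smooth field $G\in C^\infty(\overline{B(0,1)})^{d\times d}$ defined by $G_{jk}(x)=\tfrac12(c_jx_k-c_kx_j)$ and integrating by parts (no boundary contributions, as $\phi_n$ is compactly supported) gives
\[
   \langle \Curl\phi_n,G\rangle_{L^2}=-\int_{B(0,1)}\sum_{j,k}\bigl(\phi_n^{(j)}\,\partial_k\overline{G_{jk}}-\phi_n^{(k)}\,\partial_j\overline{G_{jk}}\bigr).
\]
Letting $n\to\infty$ the left-hand side tends to $0$, while, using the skew-symmetry $\overline{G_{jk}}=-\overline{G_{kj}}$ and $\sum_k\partial_k\overline{G_{jk}}=\tfrac{d-1}{2}\overline{c_j}$, the bracket on the right collapses to the constant $(d-1)\abs{c}^2$. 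Thus $0=-(d-1)\abs{c}^2\,\abs{B(0,1)}$, so $c=0$ whenever $d\geq2$.

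The main obstacle is exactly this last step: turning the boundary condition ``in disguise'' carried by $\dom(\interior{\Curl})$ into an honest identity. It is here that the dimensional hypothesis $d\geq2$ enters -- equivalently, that the outward normal of the sphere sweeps out all directions, so that no nonzero constant can be tangential everywhere; consistently, for $d=1$ the curl is vacuous, the factor $d-1$ vanishes, and the intersection of kernels is genuinely one-dimensional. The explicit choice $G=\skew(c\otimes x)$ is what makes the limit computation collapse so cleanly; a tangential-trace argument applied to $c-(c\cdot\nu)\nu$ on $\partial B(0,1)$ would work equally well but requires more machinery.
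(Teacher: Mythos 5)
Your proof is correct for $d\geq 2$, and it takes a genuinely different route from the paper's. The paper disposes of Lemma~\ref{l:toptriv} in one line, by citation: it appeals to the connectedness (more precisely, the topological triviality) of the ball and refers to the classical theory of harmonic tensors and differential forms \cite{DS52,P79}, where the dimension of the space of Dirichlet fields $\kar(\dive)\cap\kar(\interior{\Curl})$ is expressed through topological invariants of the domain, all of which vanish for $B(0,1)$. You instead assemble a self-contained argument from material the paper has already established: Theorem~\ref{t:gaff} (applicable since $B(0,1)$ is convex) gives $u\in\dom(\Grad)$ with $\Grad u=0$, connectedness gives $u\equiv c$, and the constant is killed by testing the defining approximation $\phi_n\to c$, $\Curl\phi_n\to 0$ of $u\in\dom(\interior{\Curl})$ against $G=\skew(c\otimes x)$. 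I checked the computation: indeed $\sum_k\partial_k\overline{G_{jk}}=\tfrac{d-1}{2}\overline{c_j}$ and $\sum_j\partial_j\overline{G_{jk}}=-\tfrac{d-1}{2}\overline{c_k}$, so the bracket equals $(d-1)\langle\phi_n,c\rangle_{\C^d}$ pointwise (it becomes $(d-1)\lvert c\rvert^2$ only after the limit $n\to\infty$ -- a harmless imprecision in your phrasing), yielding $0=-(d-1)\lvert c\rvert^2\,\lvert B(0,1)\rvert$ as you state; equivalently, since $G$ is smooth up to the boundary and skew, one has $G\in\dom(\interior{\Curl}^*)$ with $\interior{\Curl}^*G=-2\Dive\skew G$, and $0=\langle u,\interior{\Curl}^*G\rangle$ gives the same identity without approximating sequences. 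Your route buys two things: it needs no external references, and it makes explicit the hypothesis $d\geq2$, which the paper's statement leaves implicit and which is genuinely needed -- your diagnosis of $d=1$ is accurate, since there $\interior{\Curl}$ is the everywhere-defined zero operator, $\kar(\dive)$ consists of the constants, and the intersection is one-dimensional, so the lemma as literally stated fails. What the paper's citation-based route buys is generality: the results of \cite{DS52,P79} identify the harmonic Dirichlet fields for domains with nontrivial topology, where the convexity underlying Theorem~\ref{t:gaff} is unavailable, whereas your argument is tailored to the convex setting -- which is, however, all that is required here.
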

\begin{proof}
 The assertion follows from the connectedness of $B(0,1)$. See e.g.~\cite{DS52,P79}.
\end{proof}

For the next proposition, we closely follow a rationale given by Pauly and Zulehner, see \cite{PZpre}. We also refer to \cite{BPS16} for a similar argument.

\begin{proposition}\label{prop:closedminusone} Assume the setting in \eqref{eq:setting}. Then $\rge(\hat{\interior{\Curl}})\subseteq H^{-1}(\Omega)^{d\times d}$ is closed. 
\end{proposition}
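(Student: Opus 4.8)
The plan is to reduce the statement to Corollary \ref{cor:AB}. Observe first that, in the notation of Theorem \ref{t:tb2}, the operator $\hat{\interior{\Curl}}$ is exactly the extension $\hat{A^*}$ belonging to the choice $A\coloneqq\interior{\Curl}^*$: indeed $\hat{A^*}$ extends $A^*=\interior{\Curl}$ and maps $L^2(\Omega)^d$ into $\dom(\interior{\Curl}^*)^*$, which is how $\hat{\interior{\Curl}}=\hat{A_1}$ entered Theorem \ref{t:mr}. I would therefore apply Corollary \ref{cor:AB} with this $A$ and with $C$ the operator obtained by applying $\interior{\grad}$ entrywise to $d\times d$-matrix fields, so that $\dom(C)=\dom(\interior{\grad})^{d\times d}$. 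Since $H^{-1}(\Omega)=\dom(\interior{\grad})^*$ by definition, this gives $\dom(C)^*=H^{-1}(\Omega)^{d\times d}$, the very space in which closedness is claimed.

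Two of the three hypotheses of Corollary \ref{cor:AB} are immediate. First, $\rge(A)=\rge(\interior{\Curl}^*)$ is closed: by Theorem \ref{t:comp} the sequence $(\interior{\grad},\interior{\Curl})$ is compact, hence closed by Theorem \ref{t:tb}\ref{t4}, so $\rge(\interior{\Curl})$ is closed, and then $\rge(\interior{\Curl}^*)$ is closed by Theorem \ref{t:tb}\ref{t2} (equivalently, by the closed range theorem). Second, $\dom(C)\hookrightarrow\dom(A)$ continuously: the operator $\interior{\Curl}^*$ is of first order (it acts as $\Psi\mapsto -2\Dive\skew\Psi$ on smooth matrix fields), so its graph norm is dominated by the $H^1$-norm, and entrywise $\dom(\interior{\grad})$-regular matrix fields belong to $\dom(\interior{\Curl}^*)$ with a continuous embedding.

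The remaining, and only substantial, hypothesis is the range condition \eqref{ABeq}, i.e.\ $\rge(\interior{\Curl}^*)=\{\interior{\Curl}^*\Phi;\ \Phi\in\dom(C)\}$; here $\supseteq$ is trivial. For $\subseteq$ I must exhibit, for every $g\in\rge(\interior{\Curl}^*)$, a \emph{regular} potential $\Phi\in\dom(C)$ with $\interior{\Curl}^*\Phi=g$, following the regular-potential rationale of \cite{PZpre,BPS16}. Since $(\interior{\Curl}^*,\interior{\grad}^*)$ is a sequence by Theorem \ref{t:tb}\ref{t1}, one has $\rge(\interior{\Curl}^*)\subseteq\kar(\interior{\grad}^*)=\kar(\dive)$, so every such $g$ is divergence-free. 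Using the closedness of $\rge(\interior{\Curl})$ one solves $\interior{\Curl}^*\interior{\Curl}\phi=g$ for $\phi\in\kar(\interior{\Curl})^\bot=\rge(\interior{\Curl}^*)\subseteq\kar(\dive)$, so that $\phi\in\dom(\interior{\Curl})\cap\dom(\dive)$ with $\dive\phi=0$; by Gaffney's inequality (Theorem \ref{t:gaff}) this $\phi$ already lies in $H^1(\Omega)^d$, and $\Phi\coloneqq\interior{\Curl}\phi$ satisfies $\interior{\Curl}^*\Phi=g$. To place $\Phi$ in $\dom(C)$ I would promote $\phi$ to $H^2$ via the elliptic regularity of Theorem \ref{t:LaplReg} on the convex ball $B(0,1)$, applied to $-2\Delta\phi=g$, and finally correct the boundary trace of $\interior{\Curl}\phi$ by an element of $\kar(\interior{\Curl}^*)$ so as to land in $\dom(\interior{\grad})^{d\times d}$.

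The main obstacle is precisely this last step. The potential $\phi$ furnished by the sequence structure carries only the tangential boundary condition inherited from $\dom(\interior{\Curl})$ rather than a full Dirichlet condition, so Theorem \ref{t:LaplReg} does not apply verbatim, and the passage from an $H^1$-potential to one with the correct homogeneous boundary behaviour (i.e.\ in $\dom(\interior{\grad})^{d\times d}$) is exactly the point where the regular-decomposition argument of Pauly and Zulehner is needed. Everything else — the reduction to Corollary \ref{cor:AB} and the verification of the two easy hypotheses — is routine given the results already established.
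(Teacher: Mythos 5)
Your setup is sound as far as it goes: the identification of $\hat{\interior{\Curl}}$ as the $\hat{A^*}$ belonging to $A=\interior{\Curl}^*$, the reduction to Corollary \ref{cor:AB}, and the two easy hypotheses (closedness of $\rge(\interior{\Curl}^*)$ via Theorem \ref{t:comp} and Theorem \ref{t:tb}, continuity of $\dom(C)\hookrightarrow\dom(A)$) are all fine, and your solvability argument producing $\phi\in\kar(\interior{\Curl})^{\bot}\subseteq\kar(\dive)$ with $g=\interior{\Curl}^*\interior{\Curl}\phi$ and, by Theorem \ref{t:gaff}, $\phi\in\dom(\Grad)$, is correct. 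But what you call ``the main obstacle'' is not a loose end to be outsourced -- the range identity \eqref{ABeq} \emph{is} the entire content of the proposition, and both devices you propose for it fail. Theorem \ref{t:LaplReg} concerns the Dirichlet Laplacian $\Delta=\dive\interior{\grad}$, while your $\phi$ carries only the tangential boundary condition encoded in $\dom(\interior{\Curl})$, so no $H^2$-information is available (you concede this yourself); and the final step, ``correct the boundary trace of $\interior{\Curl}\phi$ by an element of $\kar(\interior{\Curl}^*)$'', is pure hope: adding a kernel element preserves the equation, but nothing shows a kernel element with the required trace exists. So the proof is genuinely incomplete precisely at its only substantial step.

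The missing idea is the paper's extension-and-cohomology argument, which obtains the full homogeneous boundary behaviour \emph{without} any boundary elliptic regularity. Step 1 (which you essentially have): by the sequence property and Theorem \ref{t:gaff}, every $\psi\in\rge(\interior{\Curl})$ has a potential $\phi\in\dom(\interior{\Curl})\cap\dom(\Grad)$. Step 2: extend $\phi$ and $\psi$ by zero to $B(0,2)$; re-solve on $B(0,2)$ to get $\phi_{\mathrm{r}}\in\dom(\interior{\Curl},B(0,2))\cap\dom(\Grad,B(0,2))$ with $\interior{\Curl}_{B(0,2)}\phi_{\mathrm{r}}=\psi_{\mathrm{e}}$; by Lemma \ref{l:toptriv} the cohomology of the ball is trivial, so $\phi_{\mathrm{r}}-\phi_{\mathrm{e}}=\interior{\grad}u$ for some $u$; on the annulus $B(0,2)\setminus\overline{B(0,1)}$ one has $\grad u=\phi_{\mathrm{r}}\in H^1$, hence $u\in H^2$ there; a Calderon extension $u_{\mathrm{e}}\in H^2(B(0,2))$ then makes $\phi_{\mathrm{r},0}\coloneqq\phi_{\mathrm{r}}-\grad u_{\mathrm{e}}$ vanish on the annulus while staying in $\dom(\Grad,B(0,2))$, so its restriction lies in $\dom(\interior{\Grad},B(0,1))$ and satisfies $\interior{\Curl}\phi_{\mathrm{r},0}=\psi$. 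Note also a structural discrepancy: the paper runs Corollary \ref{cor:AB} with $(A,C)=(\interior{\Curl},\interior{\Grad})$, i.e.\ it verifies \eqref{ABeq} as a regular \emph{vector}-potential statement for $\rge(\interior{\Curl})$, whereas your choice $A=\interior{\Curl}^*$ with $\dom(C)=\dom(\interior{\grad})^{d\times d}$ requires regular \emph{matrix} potentials for $\rge(\interior{\Curl}^*)$ -- a different statement, for which the paper's Friedrichs inequality (Theorem \ref{t:gaff}) provides no analogue and which your sketch likewise does not deliver. In either variant, the engine of the proof is the Step-2 extension/cut-off-by-gradient mechanism, which is absent from your proposal.
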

\begin{proof}
 In this proof, we need to consider the differential operators on various domains. To clarify this in the notation, we attach the underlying domain as an index to the differential operators in question, that is, $\grad=\grad_\Omega$ and when the domains are considered we write $\dom(\grad)=\dom(\grad,\Omega)$ and similarly for $\rge$ and $\kar$.
 We apply Corollary \ref{cor:AB} to $A=\interior{\Curl}_{B(0,1)}$, $C=\interior{\Grad}_{B(0,1)}$. Note that $\rge(A)$ is closed by Theorem \ref{t:comp} and Theorem \ref{t:tb}. Thus, we are left with showing that
 \[
    \rge(\interior{\Curl},{B(0,1)})=\{\interior{\Curl}_{B(0,1)}\phi;\phi\in \dom(\interior{\Grad},{B(0,1)})\}.
 \]
 From Proposition \ref{prop:seq} and by Theorem \ref{t:gaff}, we infer
 \begin{align*}
     \rge(\interior{\Curl}_{B(0,1)})&=\{\interior{\Curl}_{B(0,1)}\phi;\phi\in \kar(\dive,{B(0,1)})\cap\dom(\interior{\Curl},{B(0,1)})\}
     \\ & =\{\interior{\Curl}_{B(0,1)}\phi;\phi\in \dom(\Grad,{B(0,1)})\cap\dom(\interior{\Curl},{B(0,1)})\}.
 \end{align*}
 So, let $\psi=\Curl_{B(0,1)} \phi$ for some $\phi\in \dom(\interior{\Curl},{B(0,1)})\cap \dom(\Grad,{B(0,1)})$. Extend $\phi$ and $\psi$ by zero to $B(0,2)$, we call the extensions $\phi_{\text{e}}$ and $\psi_{\text{e}}$. Note that $\phi_{\text{e}}\in \dom(\interior{\Curl},{B(0,2)})$ and $\interior{\Curl}_{B(0,2)}\phi_{\text{e}}=\psi_{\text{e}}$. By the above applied to $\Omega=B(0,2)$, we find $\phi_{\text{r}}\in \dom(\interior{\Curl},{B(0,2)})\cap\dom(\Grad,{B(0,2)})$ such that $\interior{\Curl}_{B(0,2)}\phi_{\text{r}}=\interior{\Curl}_{B(0,2)}\phi_{\text{e}}=\psi_{\text{e}}$. Thus, \[\phi_{\text{r}}-\phi_{\text{e}} \in \kar(\interior{\Curl},{B(0,2)})=\rge(\interior{\grad},{B(0,2)}),\] by Lemma \ref{l:toptriv}. Thus, we find $u\in \dom(\interior{\grad},{B(0,2)})$ with $\interior{\grad}_{B(0,2)}u=\phi_{\text{r}}-\phi_{\text{e}}$.  On $B(0,2)\setminus \overline{B(0,1)}$ we have
 \[
    0=\phi_{\text{e}}=\phi_{\text{r}}-{\grad}_{B(0,2)\setminus \overline{B(0,1)}}u.
 \]
 Therefore, ${\grad}_{B(0,2)\setminus \overline{B(0,1)}}u=\phi_{\text{r}}$ on $B(0,2)\setminus\overline{B(0,1)}$. Hence, \[u\in \dom(\Grad\grad,B(0,2)\setminus \overline{B(0,1)})=H^2(B(0,2)\setminus \overline{B(0,1)}).\] By Calderon's extension theorem, there exists \[u_{\text{e}}\in \dom(\Grad\grad,B(0,2))=H^2(B(0,2))\text{ with }u_{\text{e}}=u\text{ on }B(0,2)\setminus\overline{B(0,1)}.\] Next, we observe that $\phi_{\text{r},0}\coloneqq \phi_{\text{r}}-\grad_{B(0,2)} u_{\text{e}} \in \dom(\Grad,B(0,2))$ as well as $u-u_{\text{e}}\in \dom(\grad,B(0,2))$ and
  \[
    \phi_{\text{r}}= \phi_{\text{r},0} - \grad_{B(0,2)}(u-u_{\text{e}}).
 \]
 Moreover, on $B(0,2)\setminus \overline{B(0,1)}$, we have $\phi_{\text{r},0}=0$ as well as $u-u_{\text{e}}=0$. Thus, $\phi_{\text{r},0}\in \dom(\interior{\Grad},B(0,1))$ and $u-u_{\text{e}}\in \dom(\interior{\grad},B(0,1))$. Thus, 
 \[
    \psi = \Curl_{B(0,1)} \phi =\Curl_{B(0,1)} \phi_\textnormal{r} = \Curl_{B(0,1)}(\phi_{\text{r},0} - \interior{\grad}_{B(0,1)}(u-u_{\text{e}}))=\interior{\Curl}_{B(0,1)} \phi_{\text{r},0}.
 \]
 Therefore,
 \begin{align*}
  \rge(\interior{\Curl},{B(0,1)}) & =\{\interior{\Curl}_{B(0,1)}\phi;\phi\in \dom(\interior{\Grad},{B(0,1)})\cap\dom(\interior{\Curl},B(0,1))\} 
  \\&=\{\interior{\Curl}_{B(0,1)}\phi;\phi\in \dom(\interior{\Grad},{B(0,1)})\}.\qedhere
 \end{align*}
\end{proof}

\begin{lemma}\label{l:distr} Let $\Omega\subseteq \mathbb{R}^d$ open, bounded, $\phi\in L^2(\Omega)^d$ with $\spt\phi\subseteq \Omega$. Then
\[
   \dom(\interior{\Dive}\skew)^* \ni \Curl \phi = \interior{\Curl}\phi\in \dom(\Dive\skew)^*
\]
\end{lemma}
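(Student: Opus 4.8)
The plan is to unfold the two distributional objects from the definitions and then reduce the asserted equality to the elementary inclusion $\interior{\Dive}\subseteq\Dive$, with the compact support of $\phi$ supplying the matching of the two (maximal versus boundary-condition) test spaces. By definition $\Curl=(2\interior{\Dive}\skew)^{*}$, so for $\phi\in L^2(\Omega)^d$ the object $\Curl\phi$ is the canonical extension of the adjoint in the sense of Remark~\ref{r:dadb}: it is the element of $\dom(\interior{\Dive}\skew)^{*}$ acting by $M\mapsto\langle\phi,2\interior{\Dive}\skew M\rangle$ on $M\in\dom(\interior{\Dive}\skew)$. This already records $\Curl\phi\in\dom(\interior{\Dive}\skew)^{*}$.

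For the second object I first compute the adjoint of $\interior{\Curl}$, the closure of $\Curl_{\text{c}}$, so that $\interior{\Curl}^{*}=\Curl_{\text{c}}^{*}$. Integrating by parts against $\psi\in C_c^\infty(\Omega)^d$ produces no boundary term because $\psi$ is compactly supported, and using that $\skew$ is a self-adjoint idempotent and that $\Curl_{\text{c}}\psi$ is skew-symmetric one finds that $\Curl_{\text{c}}^{*}$ is $2\Dive\skew$ on the maximal domain $\dom(\interior{\Curl}^{*})=\dom(\Dive\skew)$, in accordance with the sign and factor conventions fixed in the Definition (the same computation underlies $\Curl=(2\interior{\Dive}\skew)^{*}$); here $\Dive=-\interior{\Grad}^{*}$ is the full divergence and \emph{no} boundary condition is forced on $M$. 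Consequently $\interior{\Curl}\phi$, read again as the extension of the adjoint (this is $\hat{\interior{\Curl}}\phi$ in the notation of Proposition~\ref{prop:closedminusone}), is the element of $\dom(\Dive\skew)^{*}$ acting by $M\mapsto\langle\phi,2\Dive\skew M\rangle$, whence $\interior{\Curl}\phi\in\dom(\Dive\skew)^{*}$.

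Next I identify the two functionals. The inclusion $\interior{\Dive}\subseteq\Dive$ yields a continuous embedding $\dom(\interior{\Dive}\skew)\hookrightarrow\dom(\Dive\skew)$ on which $\interior{\Dive}\skew$ and $\Dive\skew$ agree, and therefore a canonical restriction $\dom(\Dive\skew)^{*}\to\dom(\interior{\Dive}\skew)^{*}$. Under this map $\interior{\Curl}\phi$ is sent to the functional $M\mapsto\langle\phi,2\Dive\skew M\rangle$ on $M\in\dom(\interior{\Dive}\skew)$, which coincides with $\Curl\phi$ precisely because $\Dive\skew M=\interior{\Dive}\skew M$ there. This is the meaning I attach to the asserted equality $\Curl\phi=\interior{\Curl}\phi$.

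The substantive point -- and the step where $\spt\phi\subseteq\Omega$ is indispensable -- is that $\interior{\Curl}\phi$ is genuinely the extension of $\Curl\phi$ to the larger test space, not merely some functional restricting to it; since the embedding $\dom(\interior{\Dive}\skew)\hookrightarrow\dom(\Dive\skew)$ fails to be dense, the restriction above has nontrivial kernel, so this needs an argument. I would argue with a cutoff: choose $\chi\in C_c^\infty(\Omega)$ with $\chi\equiv 1$ on a neighbourhood of the compact set $\spt\phi$. For $M\in\dom(\Dive\skew)$ the field $\chi M$ has compact support in $\Omega$, hence $\chi M\in\dom(\interior{\Dive}\skew)$ (a compactly supported element of $\dom(\Dive)$ lies in $\dom(\interior{\Dive})$, by mollification), while $\Dive\skew(\chi M)=\Dive\skew M$ on $\spt\phi$. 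Pairing with $\phi$ then gives $(\interior{\Curl}\phi)(M)=(\Curl\phi)(\chi M)$, so the value of $\interior{\Curl}\phi$ on an arbitrary $M$ is dictated by $\Curl\phi$. I expect this matching of the maximal and boundary-condition domains of $\Dive\skew$ across $\partial\Omega$ to be the main obstacle; compact support of $\phi$ is exactly what annihilates the boundary discrepancy, and the extension-by-zero/cutoff mechanism already used in the proof of Proposition~\ref{prop:closedminusone} could be invoked as an alternative.
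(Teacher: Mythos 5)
Your proposal is correct and takes essentially the same route as the paper: the paper likewise reads $\interior{\Curl}\phi$ as the functional $\psi\mapsto\langle\phi,2\Dive\skew\psi\rangle$ on $\dom(\Dive\skew)$, picks a cutoff $\eta\in C_c^\infty(\Omega)$ with $\eta=1$ on $\spt\phi$ so that $\eta\psi\in\dom(\interior{\Dive}\skew)$ and $(\interior{\Curl}\phi)(\psi)=(\Curl\phi)(\eta\psi)$, and concludes via the bound $\abs{(\Curl\phi)(\eta\psi)}\leq\kappa\norm{\psi}_{\dom(\Dive\skew)}$. Your additional steps -- computing $\interior{\Curl}^*=\Curl_{\text{c}}^*=2\Dive\skew$ on the maximal domain and justifying $\chi M\in\dom(\interior{\Dive}\skew)$ by mollification -- only make explicit what the paper leaves implicit.
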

\begin{proof}
 We have $ \dom({\Dive}\skew)^* \hookrightarrow \dom((\interior{\Dive}\skew)^*$. Let $\eta\in C_c^\infty(\Omega)$ with the property $\eta=1$ on $\spt \phi$. Then for all $\psi\in\dom(\Dive\skew)$ we have $\eta\psi\in\dom(\interior{\Dive}\skew)$ and so, 
 \begin{align*}
     \langle\interior{\Curl}\phi,\psi\rangle & =\langle\phi,2\Dive\skew\psi\rangle 
     \\&=\langle\phi,2\Dive\skew\eta\psi\rangle
     \\&=\langle\phi,2\interior{\Dive}\skew\eta\psi\rangle
     \\&=\langle\Curl\phi,\eta\psi\rangle.
 \end{align*}
 Thus, there is $\kappa>0$ such that for all $\psi\in \dom(\Dive \skew)$
 \begin{align*}
     |(\interior{\Curl}\phi)(\psi)|&=|(\Curl(\phi)(\psi))|
     \\ & =|(\Curl(\phi)(\eta\psi))|
     \\ & \leq \kappa \|\psi\|_{\dom({\Dive}\skew)}.
 \end{align*} This yields the assertion.
\end{proof}

Finally, we can prove the $\dive$-$\curl$ lemma with operator-theoretic methods. We shall also formulate a simpler version of the $\dive$-$\curl$ lemma, which needs less technical preparations. In fact, the simpler version only uses Theorem \ref{t:mrcc} and Theorem \ref{t:comp}.

\begin{proof}[Proof of Theorem \ref{t:dcl2}] We apply Theorem \ref{t:mr} with the setting in \eqref{eq:setting}. For this, by Lemma \ref{l:distr}, we note that $\Curl v_n = \interior{\Curl}v_n=\hat{\interior{\Curl}}\,v_n$. With Theorem \ref{t:mr} at hand, we need to establish that $(\hat{\interior{\Curl}}\,v_n)_n$ is relatively compact in $\dom(\interior{\Curl}^*)^*$. By Corollary \ref{cor:AB} applied to $C=A=\interior{\Curl}^*$, the latter is the same as showing that $(\hat{\interior{\Curl}}\,v_n)_n$ is relatively compact in $\rge(\hat{\interior{\Curl}})$. On the other hand, by Proposition \ref{prop:closedminusone}, $\rge(\hat{\interior{\Curl}})$ is closed in $H^{-1}(\Omega)^{d\times d}$. Thus, since $(\hat{\interior{\Curl}}\,v_n)_n$ is relatively compact in $H^{-1}(\Omega)^{d\times d}$, we get that $(\hat{\interior{\Curl}}v_n)_n$ is relatively compact in $\dom(\interior{\Curl}^*)^*$. This yields the assertion.
\end{proof}

Theorem \ref{t:mrcc} with the setting in \eqref{eq:setting} reads as follows. Note that the assertion follows from Theorem \ref{t:comp}.

\begin{theorem}\label{t:dclcc} Let $(u_n)_n$ in $\dom(\dive)$ and $(v_n)_n$ in $\dom(\interior{\Curl})$ be weakly convergent sequences. Then
\[
  \lim_{n\to\infty}\langle u_n,v_n\rangle_{L^2(\Omega)^d} = \langle \lim_{n\to\infty} u_n,\lim_{n\to\infty}v_n\rangle_{L^2(\Omega)^d}.
\] 
\end{theorem}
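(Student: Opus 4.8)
The plan is to read Theorem \ref{t:dclcc} as the verbatim specialisation of the abstract Theorem \ref{t:mrcc} to the concrete operators of the setting \eqref{eq:setting}, so that essentially no new analytic work is required beyond matching up notation.

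First I would confirm that the pair $(A_0,A_1)=(\interior{\grad},\interior{\Curl})$ genuinely satisfies the hypothesis of Theorem \ref{t:mrcc}, namely that it is a \emph{compact} sequence. That it is a sequence is the content of Proposition \ref{prop:seq}, and compactness is precisely Theorem \ref{t:comp}. Hence Theorem \ref{t:mrcc} is applicable. In particular, the finite-dimensionality of $\kar(A_0^*)\cap\kar(A_1)$ that had to be assumed separately in Theorem \ref{t:mr} is here automatic for a compact sequence, cf.\ Theorem \ref{t:tb}\ref{t5}; concretely it is even trivial, since $\kar(\dive)\cap\kar(\interior{\Curl})=\{0\}$ by Lemma \ref{l:toptriv}.

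Second, I would identify the two domain spaces appearing in Theorem \ref{t:mrcc} with those in the statement. By the very definition of the operators, $\dive = -\interior{\grad}^{*} = -A_0^{*}$, so that $\dom(A_0^{*}) = \dom(\dive)$; and $A_1 = \interior{\Curl}$ gives $\dom(A_1)=\dom(\interior{\Curl})$. Consequently the hypotheses ``$(u_n)_n$ weakly convergent in $\dom(A_0^{*})$'' and ``$(v_n)_n$ weakly convergent in $\dom(A_1)$'' of Theorem \ref{t:mrcc} read off exactly as the assumptions made here on $(u_n)_n$ in $\dom(\dive)$ and on $(v_n)_n$ in $\dom(\interior{\Curl})$. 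Invoking Theorem \ref{t:mrcc} then yields $\lim_{n\to\infty}\langle u_n,v_n\rangle_{L^2} = \langle\lim_{n\to\infty} u_n,\lim_{n\to\infty} v_n\rangle_{L^2}$, which is the claim.

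I do not expect any genuine obstacle. The analytic substance — the compactness of $\dom(\interior{\Curl})\cap\dom(\dive)\hookrightarrow H_1$, proved through Friedrich's inequality (Theorem \ref{t:gaff}) and Rellich's selection theorem — has already been absorbed into Theorem \ref{t:comp}. The only points demanding a moment's care are the bookkeeping of the adjoint in the identification $\dom(A_0^*)=\dom(\dive)$, and the observation that the inner product on $H_1=L^2(B(0,1))^d$ in the conclusion of Theorem \ref{t:mrcc} is precisely $\langle\,\cdot\,,\,\cdot\,\rangle_{L^2(\Omega)^d}$ for $\Omega=B(0,1)$.
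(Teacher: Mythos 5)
Your proposal is correct and coincides with the paper's own argument: the paper states Theorem~\ref{t:dclcc} as the direct specialisation of Theorem~\ref{t:mrcc} to the setting~\eqref{eq:setting}, with the compact-sequence hypothesis supplied by Theorem~\ref{t:comp} (and the sequence property by Proposition~\ref{prop:seq}), exactly as you do. Your bookkeeping of the adjoint, $\dom(A_0^*)=\dom(-\interior{\grad}^*)=\dom(\dive)$, is the only identification needed and you carry it out correctly.
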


It is well-known that the sequence property and the compactness of the sequence is true also for submanifolds of $\mathbb{R}^d$ and the covariant derivative on tensor fields of appropriate dimension and its adjoint. We conclude this exposition with a less known sequence. The Pauly--Zulehner $\Grad\grad$-complex, see \cite{PZ17}.
\section*{An Example -- the Pauly--Zulehner-$\Grad\grad$-complex}

In the whole section, we let $\Omega\subseteq \mathbb{R}^3$ to be a bounded Lipschitz domain. We will denote by $\curl$ the usual $3$-dimensional curl operator that maps vector fields to vector fields. Some definitions are in order
\begin{definition}We define
\begin{align*}
     \intersec{\grad_\textnormal{r}\grad} &\colon \inter{H}^2(\Omega) \subseteq L^2(\Omega)\to L_{\sym}^2(\Omega),\phi\mapsto \grad_\textnormal{r}\grad\phi.
     \\ \interior{\curl}_{\textnormal{r},\sym} &\colon \dom(\interior{\curl}_{\textnormal{r}})\cap L_{\text{sym}}^2(\Omega)\subseteq L_{\sym}^2(\Omega)\to L_{\dev}^2(\Omega) ,\phi\mapsto \interior{\curl}_{\textnormal{r}}\phi
     \\ 
     \interior{\dive}_{\textnormal{r},\dev} &\colon \dom(\interior{\dive}_{\textnormal{r}})\cap L_{\dev}^2(\Omega)\subseteq L_{\sym}^2(\Omega)\to L^2(\Omega)^d ,\phi\mapsto \Dive\phi
      \\ \ob{\dive\dive_{\textnormal{r}}}_{,\sym}& \colon \dom(\ob{\dive\Dive}_{\sym}) \subseteq L_{\sym}^2(\Omega) \to L^2(\Omega), \phi\mapsto \dive\Dive \phi,
      \\ \sym\curl_{\textnormal{r},\dev} & \colon\dom(\curl_\textnormal{r})\cap L_{\dev}^2(\Omega)\subseteq L_{\text{dev}}^2(\Omega)\to L^2_{\text{sym}}(\Omega), \phi\mapsto \sym\curl_\textnormal{r}\phi,
     \\ \dev\grad_\textnormal{r} & \colon H^1(\Omega)^3 \subseteq L^2(\Omega)^3\to L^2_{\dev}(\Omega),\phi\mapsto \dev\grad_\textnormal{r}\phi.
\end{align*}
The subscript $\textnormal{r}$ refers to row-wise application of the vector-analytic operators, where it is attached. Moreover, as before, we have attached a ``$\interior{\ }$'' above the differential operators in question, if we consider the completion of smooth tensor fields with compact support with appropriate norm. The operators $\dev$ and $\sym$ are the projections on the \emph{deviatoric} and \emph{symmetric} parts of $3\times3$-matrices, that is, for a matrix $A\in \mathbb{C}^{3\times 3}$, we put
\[
   \dev A\coloneqq A-\frac{1}{3}\tr (A)I_{3\times 3},\quad \sym A= \frac{1}{2}(A+A^T).
\]
Moreover, we define $L^2_{\dev}(\Omega)\coloneqq \dev\left[L^2(\Omega)^{3\times 3}\right]$ as well as $L^2_{\sym}(\Omega)\coloneqq \sym\left[L^2(\Omega)^{3\times 3}\right]$.
\end{definition}

Next, we gather some of the main results of Pauly--Zulehner:

\begin{theorem}[{{\cite[Lemma 3.5, Remark 3.8, and Lemma 3.21]{PZ17}}}] The pairs
\begin{multline*}
   \left(\intersec{\grad_\textnormal{r}\grad},\interior{\curl}_{\textnormal{r},\sym}\right),\; \left(\interior{\curl}_{\textnormal{r},\sym},\interior{\dive}_{\textnormal{r},\dev}\right),\\
   \left(-\dev\grad_\textnormal{r},\sym\curl_{\textnormal{r},\dev}\right),\;\left(\sym\curl_{\textnormal{r},\dev},\ob{\dive\dive_\textnormal{r}}_{,\sym}\right)
\end{multline*}
are compact sequences. Moreover, we have $\intersec{\grad_\textnormal{r}\grad}^*=\ob{\dive\dive_\textnormal{r}}_{,\sym}$, $\interior{\curl}_{\textnormal{r},\sym}^*=\sym\curl_{\textnormal{r},\dev}$, $\interior{\dive}_{\textnormal{r},\dev}^*=-\dev\grad_\textnormal{r}$. 
\end{theorem}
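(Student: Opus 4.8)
The plan is to exploit the duality already encoded in the four pairs, so as to halve the work, and then to reduce compactness to a known selection theorem by means of a regular-decomposition argument. The structural hinge is the triple of adjoint identities, so I would establish these first. Each is an integration-by-parts formula tested against smooth compactly supported tensor fields of the appropriate type: moving the row-wise curl across the $L^2$-pairing $\langle \interior{\curl}_{\textnormal{r}}\phi,\psi\rangle$ produces $\sym\curl_{\textnormal{r}}\psi$ on the other side, the symmetrisation appearing because the left argument is symmetric and $\sym$ is an orthogonal projection, and analogously for the other two operators. The genuine content is the density statement: that the ``$\interior{\ }$'' operators, defined as closures of compactly supported smooth fields, admit as Hilbert-space adjoints exactly the maximal operators, so that no spurious boundary contributions survive. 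On a bounded Lipschitz $\Omega$ this density/adjointness is the nontrivial input, and it is here that the Lipschitz regularity of the boundary enters.

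With the adjoint identities in hand, observe that $\left(\sym\curl_{\textnormal{r},\dev},\ob{\dive\dive_\textnormal{r}}_{,\sym}\right)$ is precisely the adjoint sequence $(\interior{\curl}_{\textnormal{r},\sym}^*,\intersec{\grad_\textnormal{r}\grad}^*)$ of the first pair, while $\left(-\dev\grad_\textnormal{r},\sym\curl_{\textnormal{r},\dev}\right)$ is the adjoint sequence $(\interior{\dive}_{\textnormal{r},\dev}^*,\interior{\curl}_{\textnormal{r},\sym}^*)$ of the second. By Theorem \ref{t:tb}\ref{t1} and \ref{t3}, it therefore suffices to treat the two ``Dirichlet'' pairs $\left(\intersec{\grad_\textnormal{r}\grad},\interior{\curl}_{\textnormal{r},\sym}\right)$ and $\left(\interior{\curl}_{\textnormal{r},\sym},\interior{\dive}_{\textnormal{r},\dev}\right)$; the two remaining sequence and compactness claims then follow by adjunction. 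The sequence property of these two reduces to the vector-analytic double-operator identities, verified on smooth fields and extended by closedness: $\interior{\curl}_{\textnormal{r}}(\grad_\textnormal{r}\grad\phi)=0$, since each row of $\grad_\textnormal{r}\grad\phi$ is a gradient and $\curl\grad=0$, and $\Dive(\interior{\curl}_{\textnormal{r}}\psi)=0$ for symmetric $\psi$.

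The hard part will be the compactness of these two Dirichlet pairs. I would apply the criterion of Theorem \ref{t:tb}\ref{t5}, reducing compactness to two Poincar\'e-type compact embeddings of the kernel-complements, together with finite-dimensionality of the cohomology space $\kar(A_0^*)\cap\kar(A_1)$. On a general Lipschitz domain there is no convexity-based Gaffney inequality to fall back on as in the ball case, so these embeddings cannot be read off directly. Instead I would follow Pauly--Zulehner and construct regular decompositions: any field in the intersected domain splits as an $H^1$-regular tensor field plus a remainder lying in the range of a lower-order operator of the complex, whose compactness is inherited from the classical Maxwell/de~Rham compactness property, that is, Weck's or Picard's selection theorem (\cite{W74,P84}). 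Rellich's selection theorem then disposes of the regular summand, and the cohomology spaces are identified with finite-dimensional de~Rham-type cohomology of $\Omega$. This regular-decomposition step on irregular Lipschitz domains is the technical core of \cite{PZ17} and the principal obstacle; the remainder is bookkeeping along the complex and its adjoint.
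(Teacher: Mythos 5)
The paper does not prove this theorem at all: it is stated verbatim as an import from \cite{PZ17} (Lemma 3.5, Remark 3.8, Lemma 3.21), so there is no in-paper argument to measure your proposal against. Your outline is sound and reproduces the strategy of the cited source: establish the three adjoint identities first (where, as you say, the real content is the ``weak equals strong'' density statement behind operators like $\ob{\dive\dive_\textnormal{r}}_{,\sym}$, defined via closures of smooth fields), then use Theorem \ref{t:tb}\ref{t1} and \ref{t3} to reduce the four pairs to the two Dirichlet pairs by adjunction, and obtain compactness from regular decompositions combined with Rellich's and Weck's/Picard's selection theorems \cite{W74,P84} plus finite-dimensional cohomology. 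Since the theorem is itself a citation, it is legitimate that your sketch invokes rather than executes the regular-decomposition core; as an account of how the result is actually proved, it is accurate.
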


We have now several theorems being consequences of our general observation in Theorem \ref{t:mr}. We will formulate the versions for Theorem \ref{t:mr} only. The analogues to Theorem \ref{t:mrcc} are straightforwardly written down, which we will omit here.
\begin{theorem} \begin{enumerate}[label=(\alph*)]
                 \item Let $(u_n)_n,(v_n)_n$ be weakly convergent sequences in $L^2_{\sym}(\Omega)$. Assume that 
                 \[
                     (\ob{\dive\dive_\textnormal{r}}_{,\sym}u_n)_n,  (\interior{\curl}_{\textnormal{r},\sym}v_n)_n
                 \]
                 are relatively compact in $\dom(\intersec{\grad_\textnormal{r}\grad})^*$ and $\dom(\sym\curl_{\textnormal{r}})^*$. Then
                 \[
                   \lim_{n\to\infty}\langle u_n,v_n\rangle = \langle \lim_{n\to\infty} u_n,\lim_{n\to\infty} v_n\rangle. 
                 \]
                 \item Let $(u_n)_n,(v_n)_n$ be weakly convergent sequences in $L^2_{\dev}(\Omega)$. Assume that 
                 \[
                     (\sym\curl_{\textnormal{r},\dev}u_n)_n,  (\interior{\dive}_{\textnormal{r},\dev}v_n)_n
                 \]
                 are relatively compact in $\dom(\interior{\curl}_{\textnormal{r},\sym})^*$ and $\dom(\dev\grad_\textnormal{r})^*$. Then
                 \[
                   \lim_{n\to\infty}\langle u_n,v_n\rangle = \langle \lim_{n\to\infty} u_n,\lim_{n\to\infty} v_n\rangle. 
                 \]
                \end{enumerate}
\end{theorem}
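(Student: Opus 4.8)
The plan is to recognise that both assertions are \emph{direct} specialisations of the abstract Theorem \ref{t:mr} to the two compact sequences appearing in the Pauly--Zulehner complex, so the entire task reduces to matching the hypotheses. For part (a), I would set $(A_0,A_1)\coloneqq\left(\intersec{\grad_\textnormal{r}\grad},\interior{\curl}_{\textnormal{r},\sym}\right)$, and observe that by the cited Pauly--Zulehner theorem this is a compact sequence; by Theorem \ref{t:tb}\ref{t4} it is therefore closed, which is the standing hypothesis of Theorem \ref{t:mr}. The relevant Hilbert space is $H_1=L^2_{\sym}(\Omega)$. I would then read off the adjoints from the same cited theorem: since $\intersec{\grad_\textnormal{r}\grad}^*=\ob{\dive\dive_\textnormal{r}}_{,\sym}$, the operator $\hat{A_0^*}$ is (the $\dom(A_0)^*$-extension of) $\ob{\dive\dive_\textnormal{r}}_{,\sym}$, and $A_1=\interior{\curl}_{\textnormal{r},\sym}$ with $A_1^*=\sym\curl_{\textnormal{r},\dev}$, so $\dom(A_1^*)^*=\dom(\sym\curl_{\textnormal{r}})^*$. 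Thus the two relative-compactness assumptions in part (a) are precisely the assumptions $(\hat{A_0^*}u_n)_n,(\hat{A_1}v_n)_n$ relatively compact in $\dom(A_0)^*$ and $\dom(A_1^*)^*$ required by Theorem \ref{t:mr}.

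For part (b) I would argue identically with the next sequence in the complex, taking $(A_0,A_1)\coloneqq\left(\interior{\curl}_{\textnormal{r},\sym},\interior{\dive}_{\textnormal{r},\dev}\right)$ and $H_1=L^2_{\dev}(\Omega)$. Here $A_0=\interior{\curl}_{\textnormal{r},\sym}$ has $A_0^*=\sym\curl_{\textnormal{r},\dev}$, so $\hat{A_0^*}$ extends $\sym\curl_{\textnormal{r},\dev}$ and $\dom(A_0)^*=\dom(\interior{\curl}_{\textnormal{r},\sym})^*$; while $A_1=\interior{\dive}_{\textnormal{r},\dev}$ has $A_1^*=-\dev\grad_\textnormal{r}$, so $\dom(A_1^*)^*=\dom(\dev\grad_\textnormal{r})^*$. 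Once more the two compactness hypotheses in the statement line up exactly with those of Theorem \ref{t:mr}, and the conclusion is immediate.

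The only genuine point requiring care — and the step I expect to be the main obstacle — is the \emph{finite-dimensionality} of the cohomology space $\kar(A_0^*)\cap\kar(A_1)$, which is an explicit hypothesis of Theorem \ref{t:mr}. This does not follow formally from closedness alone, but it is guaranteed here by compactness of the sequence: by Theorem \ref{t:tb}\ref{t5}, compactness of $(A_0,A_1)$ forces $\kar(A_0^*)\cap\kar(A_1)$ to be finite-dimensional. Since the Pauly--Zulehner theorem asserts that both relevant pairs are compact sequences, this cohomology condition is automatically satisfied in each case, and no separate topological or geometric computation on $\Omega$ is needed. With closedness, the identification of the adjoints, and finite-dimensionality all in hand, each part is closed by invoking Theorem \ref{t:mr} verbatim.
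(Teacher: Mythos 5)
Your proposal is correct and takes essentially the same route as the paper, which presents this theorem precisely as a direct specialisation of Theorem \ref{t:mr} to the compact Pauly--Zulehner sequences $\left(\intersec{\grad_\textnormal{r}\grad},\interior{\curl}_{\textnormal{r},\sym}\right)$ and $\left(\interior{\curl}_{\textnormal{r},\sym},\interior{\dive}_{\textnormal{r},\dev}\right)$. Your handling of the two non-trivial matching points --- closedness via Theorem \ref{t:tb}\ref{t4} and, crucially, finite-dimensionality of $\kar(A_0^*)\cap\kar(A_1)$ via Theorem \ref{t:tb}\ref{t5} from compactness of the sequence, together with the identification of $\hat{A_0^*}$ and $\hat{A_1}$ through the stated adjoint relations --- is exactly what the paper's implicit argument requires.
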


\section*{Acknowledgements}

This work was carried out with financial support of the EPSRC grant EP/L018802/2: “Mathematical foundations of metamaterials: homogenisation, dissipation and operator theory”. A great deal of this research has been obtained during a research visit of the author at the RICAM for the special semester 2016 on Computational Methods in Science and Engineering organised by Ulrich Langer and Dirk Pauly, et al. The wonderful atmosphere and the hospitality extended to the author are gratefully acknowledged.

\noindent
Marcus Waurick \\Department of Mathematical Sciences, University of Bath,\\
Claverton Down, Bath, BA2 7AY,\\
United Kingdom\\
Email: 
{\tt m.wau\rlap{\textcolor{white}{hugo@egon}}rick@bath.\rlap{\textcolor{white}{darmstadt}}ac.uk}

\end{document}